\theoremstyle{plain}
\newtheorem{theorem}{Theorem}[section]
\newtheorem{lemma}[theorem]{Lemma}
\newtheorem{corollary}[theorem]{Corollary}
\theoremstyle{definition}
\newtheorem{definition}[theorem]{Definition}
\newtheorem{remark}[theorem]{Remark}
\newtheorem{example}[theorem]{Example}
\newtheorem{problem}[theorem]{Problem}
\newtheorem*{namedthm*}{\thistheoremname}
\newcommand{\thistheoremname}{} % initialization
\newcommand{\WD}{\mathrm{WD}}
\newcommand{\KF}{\mathrm{KF}}
\newcommand{\cl}{\mathrm{cl}}
\newcommand{\sat}{\mathrm{sat}}
\newcommand{\ua}{\mathord{\uparrow}}
\newcommand{\da}{\mathord{\downarrow}}
\newcommand{\rom}[1]{\rm{\uppercase\expandafter{\romannumeral #1}}}
\def\ps@pprintTitle{%
  \let\@oddhead\@empty
  \let\@evenhead\@empty
  \def\@oddfoot{\reset@font\hfil\thepage\hfil}
  \let\@evenfoot\@oddfoot
}
\begin{document}

\title{$\omega$-well-filtered spaces, revisited} \thanks{This research is supported by NSFC (No.\,12231007 and No.\,12371457).}

\author{Hualin Miao, Xiaodong Jia, Ao Shen, Qingguo Li}
\subjclass{18A05; 18B30; 06A06, 06B35.}
\address{H.\,Miao, School of Mathematics, Hunan University, Changsha, Hunan, 410082, China. Email: {\rm miaohualinmiao@163.com} }
\address{X.\,Jia (corresponding author),  School of Mathematics, Hunan University, Changsha, Hunan, 410082, China. Email:  {\rm jiaxiaodong@hnu.edu.cn} }
\address{A.\,Shen,  School of Mathematics, Hunan University, Changsha, Hunan, 410082, China. Email:  {\rm shenao2020@163.com} }
\address{Q.\,Li, School of Mathematics, Hunan University, Changsha, Hunan, 410082, China.  Email: {\rm liqingguoli@aliyun.com}}

\begin{abstract}
We prove that a $T_0$ topological space is $\omega$-well-filtered if and only if it does not admit either the natural numbers with the cofinite topology or with the Scott topology as its closed subsets in the strong topology. Based on this,  we offer a refined topological characterization for the $\omega$-well-filterification of $T_0$-spaces and solve a problem posed by Xiaoquan Xu. In the setting of second countable spaces, we also characterise sobriety by convergences of certain $\Pi^0_2$-Cauchy subsets of the spaces. 
\end{abstract}
\keywords{$\omega$-well-filteredness; Second-countability; First-countability; Sobriety; Cauchy sequences}

\maketitle

\section{Introduction}
In Mathematics, various types of ``completions" of different structures have been drawing extensive attention and  playing essential roles both in theory and practice.  Examples include the  Dedekind-MacNeille completion of partially ordered sets, Cauchy sequences completion of metrics and Stone-\v Cech compactification  of topological spaces, et cetera. In domain theory and non-Hausdorff topology,  important completions such as the  $\mathrm{D}$-completion \cite{Keimel}, well-filterification \cite{Wu} and sobrification \cite{GHKLMS} of $T_0$ spaces are particularly well-studied in the form of reflectivity of the corresponding categories, to name a few. 

The notion of  $\omega$-well-filtered spaces, which is strictly weaker than that of well-filtered spaces introduced by Heckmann \cite{Heckmann}, is initially put forward by Xu et\,al.~\cite{XiaoquanXu}. A $T_0$ topological space $X$ is called \emph{$\omega$-well-filtered}, if for any given open subset $U$ and any descending sequence of compact saturated subsets $K_i, i\in \mathbb N$ with $\bigcap_{i=1}^\infty K_i \subseteq U$, one already has $K_i \subseteq U$ for some $i\in \mathbb N$. 
Examples of $\omega$-well-filtered spaces include all well-filtered spaces, hence, all sober spaces. Like well-filtered spaces and sober spaces, $\omega$-well-filtered spaces have many nice properties. For example, the classical result that local compactness and core-compactness coincide on sober spaces can be extended to well-filtered spaces and further to $\omega$-well-filtered spaces. Xu et\,al.~showed in~\cite{XiaoquanXu}  that the category of all $\omega$-well-filtered spaces is a reflective full subcategory of the category of $T_0$ spaces with continuous maps. It reveals the existence of ``$\omega$-well-filterification" for $T_0$ spaces. In the same paper, they gave a characterization of this completion by identifying the corresponding completion space as the family of all $\WD_{\omega}$-subsets endowed with the lower Vietoris topology. 

In this paper, we look more closely at $\omega$-well-filtered spaces through the lens of \emph{Descriptive Set Theory} (DST).  Quite surprisingly, DST has been gradually proven to be a powerful tool in non-Hausdorff topology.  Recently, de Brecht showed that a countably based $T_0$ space is sober if and only if it does not contain a $\Pi_{2}^{0}$-subspace homeomorphic to one of two specific topological spaces $S_1$ or $S_D$~\cite{Brecht}, where $S_1$ and $S_D$ are the natural numbers with the co-finite topology and the Scott topology (in the usual order), respectively.  This result was generalized to first-countable $T_0$ spaces in~\cite{LJMC}, and the authors showed that a first-countable $T_0$ space is sober if and only if it does not contain a $\Pi_{2}^{0}$-subspace homeomorphic to $S_1$, $S_D$ or a directed subset without a  maximum element.  In a similar but different vein, we prove it as a central result of this paper: a $T_0$ space $X$ is $\omega$-well-filtered if and only if $X$ does not contain $S_1$ or $S_D$ as its closed subsets in the strong topology. In the presence of first countability, $X$ is $\omega$-well-filtered if and only if $X$ does not contain copies of $S_1$ or $S_D$ as its $\Pi_{2}^{0}$-subspaces. 

This result provides  $\omega$-well-filtered spaces with a  characterization via forbidden subspaces. As we will show, such a characterization has an immediate application: 
Xu et\,al.~posed in~\cite{XiaoquanXu2} the following problem when they were investigating the relationship between well-filterification and sobrification constructions on topological spaces: 
%Xu et\,al.~proved that on each first-countable $T_0$ space, well-filteredness and sobriety coincide. A natural question related to this matter is whether the well-filterification and sobrification constructions also coincide on first-countable $T_0$ spaces, which boils down to proving whether every first-countable $T_0$ space is a well-filtered determined space in the sense of Xu et al.~\cite{XiaoquanXu2} and the first author et al.\,~\cite{Miao3} gave a positive answer to this problem.  

\begin{problem}\label{rudin}
Is every first-countable $T_0$ space a Rudin space in the sense of Xu?
\end{problem}

\noindent
Based on our aforementioned characterization for $\omega$-well-filtered spaces, we solve Problem~\ref{rudin} in the negative by displaying a counterexample. 

Moreover, in the setting of first-countable spaces, our characterization for $\omega$-well-filtered spaces via forbidden subspaces enables us to give more refined characterizations for $\omega$-well-filterification. Known  $\omega$-well-filterification of $X$ consists of all $\WD_{\omega}$-subsets of $X$ endowed with the lower Vietoris topology. While $\WD_{\omega}$-subsets are defined via quantifying over all continuous maps between $X$ and $\omega$-well-filtered spaces,  our refined description only use information of $X$ and says that $\WD_{\omega}$-subsets of $X$ are precisely the topological closures of copies of $S_1$ or $S_D$ inside $X$ and the closures of singletons of $X$. 
A similar refinement can also be said about the sobrification and  Keimel and Lawson's $\mathrm{D}$-completion. Finally, we provide a novel topological characterization for the standard $\omega$-well-filterification. That is achieved via the aid of a weaker version of the strong topology which we will introduce in this paper.

\section{Preliminaries}
\subsection{Basic notions and notations}

We use standard definitions and notations for domain theory~\cite{GHKLMS} and (non-Hausdorff) topology~\cite{GL}, of which the following may be new to some. 

Given a $T_{0}$ space $(X, \tau)$, we define $x\leq y$ if and only if $x$ is in the topological closure of $\{y\}$, in symbols $x\in \cl(\{y\})$ (also abbreviated as $x\in \cl(y)$). We call this order the \emph{specialization order} on $X$. It is known that $X$ is $T_0$ if only if $X$ is a poset with its specialization order.  A set $K$ of a topological space $X$ is called \emph{saturated} if it is the intersection of its open neighborhoods. Equivalently, $K$ is saturated if and only if  
$$K=\ua K = \{x\in X\mid x \text{~is greater than some~} k\in K \}.$$ 
Let $Y$ be a subspace of $X$ and $K\subseteq Y$. We write $\ua K\cap Y=\ua_Y K$, and dually $\da K\cap Y=\da_K Y$.  For $A \subseteq X$, the intersection of all open neighborhoods of $A$ is a saturated set. It is called the \emph{saturation} of $A$ and denoted by $\sat A$. The set of all compact saturated subsets of $X$ is denoted by $Q(X)$. 

Let $X$ be a $T_0$ space. We denote the set of all closed sets of $X$ by $\Gamma (X)$ and all open sets by $\mathcal{O} (X)$. The \emph{strong topology} (also called the Skula topology \cite{Skula}) of $X$ has as a subbasis of open sets $\Gamma(X)\cup \mathcal{O}(X)$. A subset of $X$ is called \emph{Skula closed} in $X$ if it is closed in the strong topology of $X$. One sees immediately that a subset $A$ of $X$ is Skula closed if and only if $A$ is of the form $\bigcap_{i\in I}(A_{i}\cup U_{i})$ for some indexed set $I$, where $A_i\in \Gamma(X)$ and $U_i\in \mathcal{O}(X)$.  
For any set $ A\subseteq X$, we denote the closure of $A$ by $\cl(A)$, and for any set $A \subseteq Y\subseteq X$, we denote the closure of $A$ inside $Y$ (treated as a subspace of $X$) by $\cl_Y(A)$. 
A topological space $X$ is second-countable $(A_2)$ if and only if its topology has a countable basis, and it is said to be first-countable $(A_1)$ if each point of $X$ has a countable neighbourhood basis (local base).

%A retract of a topological space $Y$ is a topological space $X$ such that there are two continuous maps $s: X \rightarrow Y$ (the section) and $r : Y \rightarrow X$ (the retraction) such that $r\circ s = id_X$.

A \emph{Scott open} subset of a poset $P$ is an upper set $U$ of $P$ such that, for every directed subset $D$ of $P$ such that $\sup D$ exists and is in $U$, there is a $d\in D$ such that $d \in U$. The collection of all Scott open subsets of $P$ will be called the \emph{Scott topology} of $P$.

A $T_0$ space $X$ is called a \emph{monotone convergence space} (or a $d$-space), if $X$ is a dcpo in the specialization order and every open subset of $X$ is Scott open in, again, the specialization order. On a $d$-space, the \emph{$d$-topology} consists of, as its closed subsets, sub-dcpo's of $X$. 

\subsection{Quasi-Polish spaces}\label{qps-section}

Quasi-Polish spaces were initially considered by de Brecht as non-Hausdorff extensions for Polish spaces \cite{B}, and form an interesting class of spaces. While the topologies on Polish spaces are induced by complete metrics, the topologies on quasi-Polish spaces are induced by \emph{quasi-metrics} that are also complete in a certain sense.  
A \emph{quasi-metric} on a set $X$ is a function $d : X \times  X \rightarrow  [0,\infty )$ such that for all $x, y, z \in  X$:

\begin{enumerate}
\item $x = y\Leftrightarrow d(x, y) = d(y, x) = 0$, 
\item $d(x, z)\leq d(x, y) +d(y, z)$. 
\end{enumerate}
A quasi-metric space is a pair $(X,d)$, where $d$ is a quasi-metric on $X$. 

A quasi-metric $d$ on $X$ induces a $T_{0}$ topology $\tau _d$ on $X$ generated by basic open balls of the form $B_{d}(x, \varepsilon) =\{y \in X \mid d(x, y) < \varepsilon \}$ for $x \in X$ and a real number $\varepsilon  > 0$.
If $(X,d)$ is a quasi-metric space, then $(X,\hat{d})$ is a metric space, where $\hat{d}$ is defined as $\hat d(x,y) = \max\{d(x,y),d(y,x)\}$. The metric topology induced by $\hat{d}$ will be denoted by $\tau_{\hat{d}}$.

A sequence $(x_n)_{n\in \omega}$ in a quasi-metric space $(X,d)$ is \emph{Cauchy} if and only if for each real number $ \varepsilon  > 0$, there exists $n_0 \in \omega $ such that $d(x_n, x_m) < \varepsilon $ for all $m\geq n\geq n_0$. $(X,d)$ is a \emph{complete} quasi-metric space if and only if every Cauchy sequence in $X$ converges with respect to the metric topology $\tau_{\hat{d}}$.

\begin{definition}\cite{B}
A topological space is \emph{quasi-Polish} if and only if it is countably based and completely quasi-metrizable. That is, the topology is induced by a complete quasi-metric defined on the space.
\end{definition}

We will need the following special sets for our reasoning. 
\begin{definition}
Let $X$ be a topological space. For each ordinal $\alpha$ $(1\leq \alpha < \omega_{1} )$, we define $\Sigma^{0}_{\alpha} (X)$ inductively as follows.
\begin{enumerate}
\item $\Sigma^{0}_{1}(X)$ is the set of all open subsets of $X$.
\item For $\alpha > 1, \Sigma^{0}_{\alpha} (X)$ is the set of all subsets $A$ of $X$ which can be expressed in the form
	$A=\bigcup_{i\in \mathbb{N}}B_{i}\backslash B_{i}^{'}$, where for each $i$, $B_{i}$ and $B_{i}^{'}$ are in $\Sigma_{\beta_{i}}^{0}$ for some $\beta_{i}<\alpha$.
\item Define $\Pi^{0}_{\alpha}(X)=\{X \backslash A\mid A \in \Sigma^{0}_{\alpha}(X)\}$ and $\Delta^{0}_{\alpha}(X)=\Sigma^{0 }_{\alpha} (X)\cap \Pi^{ 0}_{\alpha} (X)$.
\end{enumerate}
\end{definition}

From the above definition, we know that $\Pi^{0}_{2}(X)=\{\bigcap_{n\in \mathbb{N}}(A_{n}\cup U_{n})\mid A_n\in \Gamma(X), U_{n}\in \mathcal{O}(X)\}$. Obviously, $A$ is closed in the strong topology of $X$ for every $A\in \Pi^{0}_{2}(X)$.

\subsection{Sober spaces}

A nonempty subset $A$ of a $T_{0}$ space $X$ is \emph{irreducible} if $A \subseteq  B\cup C$ for closed subsets $B$ and $C$ implies $A\subseteq B$ or $A \subseteq C$. A topological space $X$ is \emph{sober} if it is $T_{0}$ and every irreducible closed subset of $X$ is the closure of a (unique) point.
A sobrification of a $T_{0}$ space $X$ consists of a sober space $Y$ and a continuous map $\eta : X \rightarrow Y$ which enjoys the following universal property: For every continuous map $f$ from $X$ to a sober space $Z$, there is a unique continuous map $\overline{f}: Y \rightarrow Z$ such that $f=\overline{f} \circ \eta$.
A standard construction for the sobrification of a $T_{0}$ space $X$ is to set
	$$X^{s} := \{ A\subseteq X : A\in \mathbf{IRR}(X)\}$$
topologized by open sets $U^{ s}:=\{ A \in X ^{s} : A \cap U \neq \emptyset\}$ for each open subset $U$ of $X$. If we define $\eta^{s}_{X} :X \rightarrow X ^{s}$ by $\eta^{s}_{X} ( x ) = \cl(\{ x \}) $, then we obtain a sobrification of $X$ \cite[Exercise~V-4.9]{GHKLMS}, which we call the standard sobrification of $X$.

\subsection{$\omega$-well-filtered spaces}

A $T_{0}$ space $X$ is called \emph{well-filtered}, if for any filtered family $\{K_{i} : i\in I \}\subseteq Q(X)$ and $U \in \mathcal{O}(X)$, the following condition holds,
	$$\bigcap_{i\in I}K_{i}\subseteq U\Rightarrow \exists i_{0}\in I, K_{i_{0}}\subseteq U.$$ 
$X$ is called \emph{$\omega$-well-filtered}, if the above holds for every countable filtered family $\{K_{i} : i < \omega \}\subseteq Q(X)$ and $U \in \mathcal{O}(X)$. Namely,
	$$\bigcap_{i<\omega}K_{i}\subseteq U\Rightarrow \exists i_{0}< \omega, K_{i_{0}}\subseteq U.$$ 
An well-filterification (resp., $\omega$-well-filterification) of a $T_{0}$ space $X$ consists of a well-filtered ($\omega$-well-filtered) space $Y$ and a continuous map $\eta : X \rightarrow Y$ which enjoys the following universal property: for every continuous map $f$ from $X$ to an arbitrary well-filtered (resp., $\omega$-well-filtered) space $Z$, there is a unique continuous map $\overline{f}: Y \rightarrow Z$ such that $f=\overline{f} \circ \eta$. A subset $A$ of a $T_{0}$ space $X$ is called a well-filtered determined set (resp., {$\omega$-well-filtered determined set}), $\WD$-set (resp., $\WD_{\omega}$-set) for short, if for any continuous mapping $f : X\rightarrow Y$ to an well-filtered (resp., $\omega$-well-filtered) space $Y$, there exists a unique $y_{A} \in Y$ such that $\cl(f(A)) = \cl(\{y_{A}\})$. The set of all closed $\WD$-sets (resp., $\WD_\omega$-sets) of $X$ is denoted by $\mathbf{WD}(X)$ (resp., $\mathbf{WD_{\omega}}(X)$). 
A standard construction for the well-filterification (resp., $\omega$-well-filterification) of a $T_{0}$ space $X$ is to set
$\mathbf{WD}(X)$ (resp., $\mathbf{WD}_{\omega}(X)$) topologized by the open sets $U^{w}:=\{ A \in \mathbf{WD}(X) : A \cap U \neq \emptyset\}$ (resp., $U^{w}:=\{ A \in \mathbf{WD_\omega}(X) : A \cap U \neq \emptyset\}$) for each open subset $U$ of $X$. See for example \cite[Theorem~6.8]{XiaoquanXu}. We write the resulting space as $X^w$ (resp., $X^{\omega-w}$). 

\subsection{$d$-spaces}
Recall that a monotone convergence space $Y$ together with a topological embedding $j:X \rightarrow Y$ with $j(X)$ a $d$-dense subset of $Y$ is called a $D$-completion of the $T_{0}$ space $X$, where a $d$-dense subset means to be dense in the $d$-topology of $Y$. A subset $A$ of a $T_{0}$ space $X$ is called \emph{tapered} if for each continuous function $f : X \rightarrow  Y$ mapping into a monotone convergence space $Y$, $ \sup f(A)$ always exists in $Y$.
A standard construction for the $D$-completion of a $T_{0}$ space $X$ is to set
	$$X^{d} := \{ A\subseteq X : A ~\mathrm{is} ~\mathrm{closed} ~\mathrm{and} ~\mathrm{tapered}\}$$
topologized by open sets $U^{ d}:=\{ A \in X ^{d} : A \cap U \neq \emptyset\}$ for each open subset $U$ of $X$. If we define $\eta^{d}_{X} :X \rightarrow X ^{d}$ by $\eta^{d}_{X} ( x ) = \cl(\{ x \}) $, then we obtain a $D$-completion \cite[Theorem 3.10]{zhongxizhang}, which we call the standard $D$-completion.

\begin{remark}
In a topological space $X$, all tapered sets are $\WD$ sets, $\WD$ sets are $\WD_\omega$ sets, and $\WD_\omega$ are irreducible sets. If $X$ is second countable, then they all coincide. 
\end{remark}

\subsection{Rudin spaces}

Let $X$ be a $T_{0}$ space. A nonempty subset $A$ of $X$ is said to have the \emph{compactly filtered property} or $\KF$ property (resp., countable compactly filtered property or $\KF_\omega$ property), if there exists a filtered family $\mathcal{K}$ of $Q(X)$ such that $\cl(A)$ is a minimal closed set that intersects all members of $\mathcal{K}$, where $\mathcal{K}$ is a (resp., countable) subset of $Q(X)$. We call such a set a $\KF$-set (resp., a $\KF_{\omega}$-set). Denote by $\mathbf{KF}(X)$ (resp., $\mathbf{KF_{\omega}}(X)$) the set of all closed $\KF$ (resp., closed $\KF_{\omega}$) subsets of $X$. All KF-sets are closed irreducible sets, and a space in which irreducible sets are KF-sets is called a \emph{Rudin} space. Second countable spaces are instances of Rudin spaces~\cite{XiaoquanXu}.

\section{The forbidden subspaces}

In this section, we give one of the main results of this paper and show that a $T_0$ topological space is $\omega$-well-filtered if and only if it does not contain certain subsets as subspaces. 
To start with, we have the two canonical examples of countably based $T_{0}$-spaces that are not $\omega$-well-filtered:

\begin{example}~
\begin{itemize}
\item  The space $S_1$ is defined as the set of all natural numbers $\mathbb{N}$ with the cofinite topology, in which proper closed subsets are finite subsets of $S_1$. 
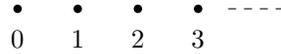
\begin{figure}[htb]
	\centering
	\begin{tikzpicture}[scale=0.8]
		\foreach \x in {1,2,3,4} \fill (\x,0) circle (2pt);
		\node (2) at(1,-0.5) {\small  $0$};
		\node (1) at(2,-0.5) {\small  $1$};
		\node (3) at(3,-0.5) {\small  $2$};
			\node (3) at(4,-0.5) {\small  $3$};
		\draw[dashed] (4.5,0)--(5.5,0);
	\end{tikzpicture}
	\caption{The non-$\omega$-well-filtered space $S_1$}
\end{figure}
\item  The space $S_D$ is defined as the set of natural numbers $\mathbb{N}$ with the Scott topology under the usual ordering.
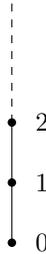
\begin{figure}[htb]
	\centering
	\begin{tikzpicture}[scale=0.8]
		\foreach \x in {0,1,2} \fill (2,\x) circle (2pt);
		\draw (2,0)--(2,2);
		\draw[dashed] (2,2)--(2,4);
		\node (2) at(2.5,1) {\small  $1$};
		\node (1) at(2.5,0) {\small  $0$};
		\node (3) at(2.5,2) {\small  $2$};
	\end{tikzpicture}
	\caption{The non-$\omega$-well-filtered space $S_D$}
\end{figure}
\end{itemize}
In both $S_1$ and $S_D$, $K_n = \{n, n+1, n+2, \cdots \}, n\in \mathbb N$ are nonempty compact saturated subsets, but the intersection of 
$K_n, n\in \mathbb N$ is empty. So neither of them is $\omega$-well-filtered. 

\end{example}

Recall that a subset of a space is \emph{locally-closed} if it is equal to the intersection of an open set with a closed set. A \emph{$T_{D}$-space} is a space in which every singleton subset is locally closed. A space is \emph{perfect} if and only if every non-empty open subset is infinite. In \cite[Theorem 3.5]{Brecht}, it was shown that a non-empty countably based perfect $T_{D}$-space with countably many points contains a perfect $\Pi_{2}^{0}$-subspace homeomorphic to either $\mathbb{Q}$, $S_1$ or $S_D$, where $\mathbb{Q}$ is the set of rational numbers with the subspace topology inherited from the space of reals. In \cite{Brecht}, de Brecht obtained the result that a countably based $T_0$-space is sober if and only if it does not contain a $\Pi_{2}^{0}$-subspace homeomorphic to $S_D$ or $S_1$. 

In the same spirit, we give a characterization for $\omega$-well-filtered spaces (which may fail to be countably based)  by two types of forbidden subspaces. Using this characterization, we present a new description for the standard $\omega$-well-filterification for first-countable $T_0$ spaces and give an answer to Problem~\ref{rudin} mentioned in the Introduction. To achieve this, we start with the following lemma.

\begin{lemma}\label{KF}
Let $X$ be a $T_0$ space. If $A\in \mathbf{KF}_{\omega}(X)$ and $A\notin \{\da _X x\mid x\in X\}$, then there is a  closed subset $B$ in the strong topology of $X$ that is homeomorphic to $S_D$ or $S_1$ such that $\cl_X(B)=A$.
\end{lemma}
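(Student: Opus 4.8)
The plan is to extract the desired copy of $S_1$ or $S_D$ as a carefully chosen countable subset of $A$, using the minimality of $A$ to force density and the non-principality hypothesis to control the specialization order. First I would normalize the witnessing family. Since $A\in\mathbf{KF}_\omega(X)$, there is a countable filtered family $\mathcal K\subseteq Q(X)$ with $A=\cl(A)$ minimal closed meeting every member of $\mathcal K$; passing to a cofinal subfamily I may assume $\mathcal K$ is a descending sequence $K_0\supseteq K_1\supseteq\cdots$, and one checks minimality is preserved. The two facts I would record from minimality are: (a) $A\cap\bigcap_n K_n=\emptyset$ — otherwise a point $a$ in the intersection has $\da a=\cl(a)$ meeting every $K_n$, forcing $A=\da a$ against the hypothesis $A\notin\{\da_X x\}$; and (b) the closure of any $C\subseteq A$ meeting every $K_n$ equals $A$. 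Fact (b) is the engine that makes density automatic: I only need to build $B\subseteq A$ meeting every $K_n$, and then $\cl_X(B)=A$ is free, with no separability assumption on $A$. Using that $K_n$ is saturated, $b\le x\Rightarrow(b\in K_n\Rightarrow x\in K_n)$, so the ``escape index'' $e(x)=\max\{n:x\in K_n\}$ (finite on $A$ by (a)) is monotone: $b\le x\Rightarrow e(b)\le e(x)$.

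Next I would choose $b_m\in A\cap K_m$, so $e(b_m)\ge m$, and thin to a subsequence along which $e$ is strictly increasing; this keeps the escape indices unbounded, so the subsequence still meets every $K_n$. Monotonicity of $e$ shows that for indices $p<q$ one can never have $b_q\le b_p$, so each pair is either strictly increasing ($b_p<b_q$) or incomparable. Applying the infinite Ramsey theorem to this $2$-colouring of pairs yields an infinite homogeneous subset, giving either a strictly increasing chain $b_{i_0}<b_{i_1}<\cdots$ or an infinite antichain; set $B$ to be this set. Since the escape indices along $B$ remain strictly increasing, $B$ meets every $K_n$ and so $\cl_X(B)=A$ by Fact (b).

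It remains to identify the subspace and prove Skula-closedness. For the homeomorphism type I would compute the relatively closed sets $\cl_X(S)\cap B$: a finite $S$ gives back $S$ (antichain) or the initial segment it generates (chain), while any infinite $S\subseteq B$ has unbounded escape indices, hence meets all $K_n$, hence $\cl_X(S)=A\supseteq B$. Thus the relatively closed sets are exactly the finite sets together with $B$ in the antichain case (the cofinite topology, so $B\cong S_1$) and exactly the finite initial segments together with $B$ in the chain case (the Scott topology on $(\omega,\le)$, so $B\cong S_D$). For Skula-closedness I would use that the Skula neighbourhood filter of $x$ is generated by $U\cap\da x$ with $U$ open, so that $\mathrm{cl}_{\mathrm{Skula}}(B)=\{x: x\in\cl_X(B\cap\da x)\}$; I must show $x\notin B\Rightarrow x\notin\cl_X(B\cap\da x)$. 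If $x\notin A$ this is immediate since $\cl_X(B\cap\da x)\subseteq\cl_X(B)=A$. If $x\in A$, then by monotonicity of $e$ and strict monotonicity along $B$, the set $B\cap\da x$ is either finite (antichain, or a finite initial segment of the chain), whence its closure $\bigcup\{\da b: b\in B\cap\da x\}$ cannot contain $x\notin B$; or, in the chain case, all of $B$, which makes $x$ an upper bound of $B$ with $\da x\supseteq\cl_X(B)=A\ni x$, i.e. $A=\da x$, contradicting $A\notin\{\da_X x\}$. Hence $B$ is Skula-closed.

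The main obstacle is this last step: simultaneously pinning down the exact subspace topology (not merely the specialization order) and the Skula-closedness of $B$. This is precisely where the two hypotheses do their work — the strictly increasing escape indices guarantee that every infinite subset of $B$ is dense and that $B\cap\da x$ is finite for $x\in A$ in the antichain case, while the non-principality $A\notin\{\da_X x\}$ is exactly what excludes the single dangerous configuration, namely an external upper bound of the whole chain lying in $A$, in the $S_D$ case.
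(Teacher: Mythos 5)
Your proof is correct, but it takes a genuinely different route from the paper's. The paper also normalizes $\mathcal{K}$ to a descending sequence and picks $x_n\in A\cap K_n$, but it then treats $H=\{x_n\mid n\in\mathbb{N}\}$ as an abstract subspace: it proves that every open set meeting $H$ is cofinite in $H$, deduces that $H$ is a countable, countably based, perfect $T_D$-space, invokes de Brecht's Hurewicz-type dichotomy \cite[Theorem 3.5]{Brecht} to obtain a $\Pi^0_2$-subspace of $H$ homeomorphic to $\mathbb{Q}$, $S_1$ or $S_D$, rules out $\mathbb{Q}$ by the cofiniteness claim, and finally gets Skula-closedness by writing $H=(\bigcap_{n}H_n)\cap A$ with $H_n=\{x_i\mid i\leq n\}\cup\bigcap_{i\leq n}K_i$ and using that $\Pi^0_2$-subsets of $H$ are Skula closed in $H$. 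You avoid the descriptive-set-theoretic black box entirely: your monotone ``escape index'' (exploiting that the $K_n$ are upper sets) combined with the infinite Ramsey theorem extracts an explicit chain or antichain $B$, the subspace topology is then computed by hand (antichain gives the cofinite topology, hence $S_1$; chain gives the Scott topology on $\omega$, hence $S_D$), and Skula-closedness is checked directly from the neighbourhood basis $\{U\cap\da_X x\mid U\ \text{open},\, x\in U\}$. What the paper's route buys is brevity, given the cited theorem, plus the side fact that the whole set $H$ is Skula closed; what yours buys is a self-contained elementary argument that makes transparent why exactly $S_1$ and $S_D$ (and never $\mathbb{Q}$) can occur. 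Two small remarks: the case ``$B\cap\da_X x$ is all of $B$'' in your Skula argument can in fact never arise, since for $x\in A$ the escape index $e(x)$ is finite while those along $B$ are unbounded, so $B\cap\da_X x$ is always finite (your treatment of the spurious case is nevertheless sound); and when you assert that the relatively closed sets of $B$ are ``exactly'' the ones you list, you are implicitly using that every relatively closed set $C\cap B$ equals $\cl_X(C\cap B)\cap B$, which deserves one explicit line.
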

\begin{proof}
Let $A\in \mathbf{KF}_{\omega}(X)$. Then there exists a filtered family $\mathcal{K}$ of $Q(X)$ such that $A$ is a minimal closed set that intersects all members of $\mathcal{K}$, where $\mathcal{K}$ is a countable subset of $Q(X)$. The countability of $\mathcal{K}$ guarantees that we can assume that $\mathcal{K}=(K_{n})_{n\in \mathbb{N}}$ is a descending chain, that is, $K_{n+1}\subseteq K_n$ for any $n\in \mathbb{N}$. Then we pick $x_n\in A\cap K_{n}$ for any $n\in \mathbb{N}$. Set $H=\{x_n\mid n\in \mathbb{N}\}$ and note that $\cl_X (H)$ intersects all members of $\mathcal{K}$. By the minimality of $A$, we have $A=\cl_X (H)$. Now we prove that $H$ is an infinite set. We proceed by contradiction. Suppose $H$ is finite. Then $\cl_X (H)=A=\bigcup_{x\in H}\da_X x$. The irreducibility of $A$ implies that $A=\da x$ for some $x\in H$. This violates the assumption that $A\notin \{\da_X x\mid x\in X\}$. Therefore, $H$ is an infinite set. It indicates that $\cl_X(E)=\cl_X(H)=A$ for any infinite subset $E$ of $H$.

Claim $1$: $H\backslash U$ is finite for any $U\in \mathcal{O}(X)$ with $U\cap H\neq \emptyset$. 

Let $U\in \mathcal{O}(X)$ with $U\cap H\neq \emptyset$. Assume that $H\backslash U$ is infinite. Since $H\subseteq A$, $(A\backslash U)\cap H$ is infinite. It means that $(A\backslash U)\cap K_n\neq \emptyset$ for any $n\in \mathbb{N}$. Applying the minimality of $A$ again, we know that $A=A\backslash U$. It follows that $H\subseteq A\subseteq X\backslash U$, which contradicts the assumption that $H\cap U\neq \emptyset$.

Claim $2$: $H$ is a non-empty countably based perfect $T_D$-space with countably many points.

One can see directly that $H$ is a non-empty countably based perfect space with countably many points from Claim $1$. It remains to confirm that $H$ is a $T_D$ space. To this end, let $x\in H$, we show that $\da_H x\backslash\{x\}$ is finite. If $\da_H x\backslash \{x\}$ is infinite, then $\cl_X(\da _H x\backslash \{x\})=\cl_X(H)=A=\da _X x$. A contradiction. Hence, $\da_H x\backslash \{x\}$ is finite. This means that $\da_H (\da_H x\backslash \{x\})=\da _H x\backslash \{x\}$ is a closed set of $H$, which is equivalent to saying that $H$ is a $T_D$ space.

On account of \cite[Theorem 3.5]{Brecht}, we know that there is a perfect $\Pi_{2}^{0}$-subspace $B$ of $H$ homeomorphic to either $\mathbb{Q}$, $S_1$ or $S_D$. It manifests that $B$ is a Skula closed subset of $H$.
Using Claim~$1$, we know that $B$ must be $S_1$ or $S_D$. The fact that $B$ is an infinite subset of $H$ infers that $\cl_X(B)=A$. We proceed  to check that $H$, hence $B$,  is a Skula closed subset of $X$.

Write $H_n=\{x_i\mid i\leq n\}\cup\bigcap_{i\leq n}K_{i}$ for any $n\in \mathbb{N}$. Because $K=\bigcap_{U\in \mathcal{U}}U$ for any $K\in Q(X)$, where $\mathcal{U}=\{U\in \mathcal{O}(X)\mid K\subseteq U\}$, the set $\bigcap_{i\leq n}K_{i}$ is Skula closed. The single set $\{x\}$ is Skula closed for any $x\in X$ owing to the fact that $\{x\}=\da_X x\cap \ua_X x$. It implies that $H_n$ is Skula closed for any $n\in \mathbb{N}$. The proof is complete if we know that $H=(\bigcap_{n\in \mathbb{N}}H_n)\cap A$. Indeed, that $H\subseteq (\bigcap_{n\in \mathbb{N}}H_n)\cap A$ is trivial. Now let $x\in( \bigcap_{n\in \mathbb{N}}H_n)\cap A$. Assume for the sake of a
contradiction that $x\notin H$. Then $x\in \bigcap_{n\in \mathbb{N}}K_{n}\cap A$. The minimality of $A$ tells us that $A=\da_X x$, a contradiction. So $H$ is indeed Skula closed. 
\end{proof}

By applying the above lemma, we have the following theorem.
\begin{theorem}
Let $X$ be a $T_0$ space. Then $X$ is $\omega$-well-filtered if and only if it does not contain a copy of $S_D$ or $S_1$ as Skula closed subsets.
\end{theorem}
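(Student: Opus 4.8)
The plan is to prove both implications by contraposition, using Lemma~\ref{KF} as a bridge together with the characterisation that $X$ is $\omega$-well-filtered exactly when every $\KF_\omega$-set of $X$ is the closure of a point. First I would record the easy half of that characterisation: if $X$ is $\omega$-well-filtered and $A\in\mathbf{KF}_{\omega}(X)$ is witnessed by a descending chain $(K_n)_{n\in\mathbb N}\subseteq Q(X)$, with $A$ minimal closed meeting every $K_n$, then $\bigcap_n K_n$ must meet $A$; otherwise $\bigcap_n K_n\subseteq X\setminus A$, and $\omega$-well-filteredness gives some $K_m\subseteq X\setminus A$, contradicting $A\cap K_m\neq\emptyset$. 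Choosing $x\in A\cap\bigcap_n K_n$ and using that each $K_n$ is saturated (an upper set), the set $\da_X x$ already meets every $K_n$ and lies in $A$, so minimality forces $A=\da_X x$. Thus in an $\omega$-well-filtered space every $\KF_\omega$-set is a point-closure.

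For the implication ``no Skula copy $\Rightarrow$ $\omega$-well-filtered'' I argue contrapositively. Suppose $X$ is not $\omega$-well-filtered, witnessed by a descending chain $(K_n)_{n\in\mathbb N}\subseteq Q(X)$ and an open $U$ with $\bigcap_n K_n\subseteq U$ but $K_n\not\subseteq U$ for all $n$. Consider the closed sets contained in $X\setminus U$ that meet every $K_n$; this family is nonempty (it contains $X\setminus U$, since $K_n\not\subseteq U$), and the intersection of any descending chain in it still meets each $K_n$, because $K_n$ is compact and a chain of nonempty relatively closed subsets of $K_n$ has the finite intersection property. Zorn's Lemma then yields a minimal such closed set $A$. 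As any proper closed subset of $A$ meeting all $K_n$ again lies in $X\setminus U$, the set $A$ is in fact minimal among \emph{all} closed sets meeting $(K_n)$, so $A\in\mathbf{KF}_{\omega}(X)$. Moreover $A\neq\da_X x$ for every $x$: if $A=\da_X x$ then $x\in A\subseteq X\setminus U$, while $A\cap K_n\neq\emptyset$ together with the saturation of $K_n$ forces $x\in K_n$ for all $n$, whence $x\in\bigcap_n K_n\subseteq U$, a contradiction. Lemma~\ref{KF} now produces a Skula-closed copy of $S_1$ or $S_D$ inside $X$.

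For the implication ``$\omega$-well-filtered $\Rightarrow$ no Skula copy'', again contrapositive, suppose $B\subseteq X$ is Skula closed and homeomorphic to $S_1$ or $S_D$. Let $C_n$ be the images of the canonical compact saturated sets $\{n,n+1,\dots\}$, so $(C_n)$ is a descending chain of nonempty compact subsets of $B$ whose only closed-in-$B$ set meeting all of them is $B$ itself. Put $Q_n:=\ua_X C_n\in Q(X)$ (a descending chain) and $A:=\cl_X(B)$. I would show $A\in\mathbf{KF}_{\omega}(X)$ via $(Q_n)$: if $C\subseteq A$ is closed and meets every $Q_n$, then any $w\in C\cap\ua_X C_n$ lies above some $c\in C_n$, and $c\in\cl_X(w)\subseteq C$, so $C$ already meets every $C_n$; hence $C\cap B$ is a closed-in-$B$ set meeting all $C_n$, forcing $C\cap B=B$ and $C=A$. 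The crucial point is then $A\neq\da_X x$. If $A=\da_X x$, then $B\subseteq\da_X x$ makes $x$ an upper bound of $B$; if $x\notin B$, Skula-closedness provides a closed $F$ and open $V$ with $x\in F\cap V\subseteq X\setminus B$, and since $F$ is down-closed and contains $x$ we get $B\subseteq\da_X x\subseteq F$, so $V\cap B=(F\cap V)\cap B=\emptyset$, contradicting $x\in V\cap\cl_X(B)$; and if $x\in B$ then $x$ is a maximum of $B$, impossible as neither $S_1$ nor $S_D$ has one. So $A$ is a $\KF_\omega$-set that is not a point-closure, and by the easy half above $X$ cannot be $\omega$-well-filtered.

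I expect the main obstacle to be this last implication, specifically the use of Skula-closedness to exclude $\cl_X(B)=\da_X x$: for a merely topological (non-Skula-closed) subspace the conclusion genuinely fails — for instance $B=\mathbb N\cong S_D$ sits inside the sober space $\omega+1$ with $\cl(B)=\da\infty$ — so the separation of $x$ from $B$ by a locally closed set is precisely where Skula-closedness is indispensable. The other delicate step is the minimality argument identifying $\cl_X(B)$ as a $\KF_\omega$-set, which rests on the reduction from ``$C$ meets $Q_n$'' to ``$C$ meets $C_n$'' obtained from down-closedness of $C$.
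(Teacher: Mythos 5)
Your proof is correct, and it shares the paper's skeleton: both arguments pivot on the characterisation that $X$ is $\omega$-well-filtered if and only if every $\KF_\omega$-set is a point closure, and both obtain the direction ``no Skula copies $\Rightarrow$ $\omega$-well-filtered'' by feeding a non-point-closure $\KF_\omega$-set into Lemma~\ref{KF}. The execution differs in two respects. First, where the paper simply cites \cite{XiaoquanXu} for that characterisation, you prove the two halves you actually need: the easy half by applying $\omega$-well-filteredness to the open set $X\setminus A$, and the production of a non-point-closure $\KF_\omega$-set from a failure of $\omega$-well-filteredness by a Zorn's lemma argument on the closed subsets of $X\setminus U$ meeting all $K_n$ (compactness of each $K_n$ giving closure under descending chains); this makes your proof self-contained where the paper's is not. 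Second, for ``$\omega$-well-filtered $\Rightarrow$ no Skula copies'' the paper proves the stronger hereditary statement that \emph{every} Skula closed subspace $Y$ of an $\omega$-well-filtered space is $\omega$-well-filtered; that route rests on the cited but unproved fact that $\KF_\omega$-sets of a subspace remain $\KF_\omega$-sets of the ambient space, and then places the point $a$ with $\cl_X(A)=\da_X a$ inside $Y$ via the Skula closure. You instead work concretely with the copy $B$: the descending chain $Q_n=\ua_X C_n\in Q(X)$ witnesses $\cl_X(B)\in\mathbf{KF}_\omega(X)$ (your reduction from ``meets $Q_n$'' to ``meets $C_n$'' via down-closedness of closed sets is sound), and Skula closedness, through a basic locally closed neighbourhood $F\cap V$ of a hypothetical $x$ with $\cl_X(B)=\da_X x$, yields the contradiction --- the exact contrapositive counterpart of the paper's ``$a$ lies in the Skula closure of $Y$'' step. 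The paper's route buys an independently interesting heredity result for Skula closed subspaces; yours avoids the preservation lemma entirely, and your $\omega+1$ example correctly isolates why Skula closedness, rather than mere subspace embedding, is the indispensable hypothesis.
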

\begin{proof}
If $X$ is an $\omega$-well-filtered space, then we claim that $Y$ is $\omega$-well-filtered for any Skula closed subset $Y$ of $X$. To this end, let $A\in \mathbf{KF}_{\omega}(Y)$. Then $A\in \mathbf{KF}_{\omega}(X)$. From \cite{XiaoquanXu}, we know that $X$ is $\omega$-well-filtered if and only if $A$ is the closure of some single point $x\in X$ for any $A\in \mathbf{KF}_{\omega}(X)$. This yields that $\cl_{X}(A)=\da_X a$ for some $a\in X$. Now it suffices to show that $a\in Y$. For any $U\in \mathcal{O}(X)$ with $a\in U$, then $\emptyset\neq U\cap A\subseteq U\cap A\cap \da_X a\subseteq U\cap Y\cap \da _X a$, which says that $a$ is contained in the Skula closure of $Y$. Hence, $a$ is in $Y$ from the assumption that $Y$ is Skula closed. This implies that $X$ does not contain a Skula closed subspace homeomorphic to $S_D$ or $S_1$.

For the converse, we proceed by contradiction. Suppose that $X$ is not $\omega$-well-filtered. Then we can find $A\in \mathbf{KF}_{\omega}(X)$ satisfying $A\notin \{\da_X x\mid x\in X\}$. According to Lemma \ref{KF}, we can conclude that there exists a subset $B$ of $A$ which is closed in the strong topology of $X$ and homeomorphic to $S_D$ or $S_1$ such that $\cl_X(B)=A$. A contradiction. 
\end{proof}

When the underlying $T_0$ space is first-countable, we could say more and Lemma~\ref{KF} can be sharpened into the following:

\begin{theorem}
Let $X$ be a first-countable $T_0$ space. If $A\in \mathbf{WD}_{\omega}(X)$ and $A\notin \{\da_X x\mid x\in X\}$, then there is a $\Pi^{0}_2$-subset $B$ of $A$ homeomorphic to $S_D$ or $S_1$ and $\cl_X(B)=A$.
\end{theorem}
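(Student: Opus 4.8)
The plan is to run the construction of Lemma~\ref{KF} together with de Brecht's extraction theorem, the new content being to upgrade the conclusion from \emph{Skula closed} to \emph{$\Pi^{0}_{2}$}, with first countability providing the extra leverage. From $A\in\mathbf{WD}_{\omega}(X)$ and $A\notin\{\da_X x\mid x\in X\}$ I would first produce a countable set $H=\{x_n\mid n\in\mathbb{N}\}\subseteq A$ with $\cl_X(H)=A$ enjoying the Claim~1 property of Lemma~\ref{KF}: every $U\in\mathcal{O}(X)$ meeting $H$ satisfies that $H\setminus U$ is finite. For a $\mathbf{KF}_{\omega}$-set this $H$ came from a descending family in $Q(X)$; for a $\mathbf{WD}_{\omega}$-set I would either first check that first countability forces $\mathbf{WD}_{\omega}(X)=\mathbf{KF}_{\omega}(X)$, or extract $H$ directly from the determinedness of $A$ using first-countable local bases. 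As in Lemma~\ref{KF}, $H$ is then a non-empty countably based perfect $T_D$ space, so \cite[Theorem 3.5]{Brecht} furnishes $B\subseteq H$ that is $\Pi^{0}_{2}$ in the subspace $H$ and homeomorphic to $\mathbb{Q}$, $S_1$ or $S_D$; the Claim~1 property excludes $\mathbb{Q}$, and since $B$ is infinite we get $\cl_X(B)=A$.

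It remains to realise $B$ as a $\Pi^{0}_{2}$-subset of the whole space $X$. Because the subspace pointclasses satisfy $\Pi^{0}_{2}(H)=\{P\cap H\mid P\in\Pi^{0}_{2}(X)\}$, we may write $B=P\cap H$ with $P\in\Pi^{0}_{2}(X)$; since $\Pi^{0}_{2}(X)$ is closed under finite intersections, it therefore suffices to prove $H\in\Pi^{0}_{2}(X)$, equivalently $A\setminus H\in\Sigma^{0}_{2}(A)$. This is where first countability is decisive: a countable decreasing local base at a point $x$ gives $\ua_X x=\bigcap_k U_{x,k}\in\Pi^{0}_{2}(X)$, hence $\{x\}=\da_X x\cap\ua_X x\in\Pi^{0}_{2}(X)$ and $\da_X x\setminus\ua_X x=\da_X x\cap(X\setminus\ua_X x)\in\Sigma^{0}_{2}(X)$. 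Writing $D=A\cap\bigcup_n\da_X x_n$ for the points of $A$ lying below some $x_n$, each $\da_X x_n\cap H$ is finite by the argument used for Claim~2 of Lemma~\ref{KF}; removing these finitely many $\Pi^{0}_{2}$ singletons from the closed set $\da_X x_n\cap A$ gives $(\da_X x_n\cap A)\setminus H\in\Sigma^{0}_{2}(A)$, so the ``downward'' part $D\setminus H=\bigcup_n((\da_X x_n\cap A)\setminus H)$ lies in $\Sigma^{0}_{2}(A)$.

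The main obstacle is the ``lateral'' remainder $A\setminus D=\{y\in A\mid y\not\le x_n\text{ for all }n\}=A\cap\bigcap_n(X\setminus\da_X x_n)$, which a priori is only $\Pi^{0}_{2}(A)$ and must be shown to lie in $\Sigma^{0}_{2}(A)$. I would exploit the minimality inherent in $A\in\mathbf{WD}_{\omega}(X)$ together with first countability at the limit points of $H$: since $A$ is not a point closure, no $y\in A$ satisfies $x_n\le y$ for infinitely many $n$ (otherwise $\cl_X(\{x_n\mid x_n\le y\})=A\subseteq\da_X y$ would force $A=\da_X y$), so every $y\in A$ dominates only finitely many members of $H$. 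Combining this ``finite level'' data with a countable local base at each $y$ and the Claim~1 property, I expect to stratify $A\setminus D$ into countably many locally closed pieces and thereby place it in $\Sigma^{0}_{2}(A)$. Making this stratification precise---in particular for $B\cong S_1$, where $H$ is an antichain and the natural compact saturated witnesses are tails that are not themselves $G_\delta$---is the crux of the argument. Once it is in hand, $A\setminus H=(D\setminus H)\cup(A\setminus D)\in\Sigma^{0}_{2}(A)$, so $H\in\Pi^{0}_{2}(X)$ and hence $B=P\cap H\in\Pi^{0}_{2}(X)$, which together with $\cl_X(B)=A$ completes the proof.
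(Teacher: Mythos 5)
Your outline agrees with the paper's strategy up to the decisive step, and the reductions you make are sound as far as they go: extracting a countable $H\subseteq A$ with $\cl_X(H)=A$ (the paper does this via first countability of $X^{\omega-w}$, quoting \cite{Miao3}, rather than via $\mathbf{WD}_{\omega}(X)=\mathbf{KF}_{\omega}(X)$, which your alternative route would still need to prove or cite), applying \cite[Theorem 3.5]{Brecht} and excluding $\mathbb{Q}$, and observing that relativization of $\Pi^{0}_{2}$ plus closure under finite intersections reduces everything to showing that the countable set itself lies in $\Pi^{0}_{2}(X)$. The genuine gap is exactly the step you flag as the crux, and it is not a technicality that a finer stratification can patch: from the Claim~1 property alone, the assertion $H\in\Pi^{0}_{2}(X)$ is not provable, because that property constrains only which open sets meet $H$ and says nothing about the Borel complexity of the ``lateral'' set $A\cap\bigcap_{n}(X\setminus\da_X x_n)$, while the minimality/determinedness of $A$ constrains closed subsets of $A$, not $\Sigma^{0}_{2}$ decompositions. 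This is precisely why Lemma~\ref{KF} concludes only Skula closedness (its witnesses $\bigcap_{i\le n}K_i$ are intersections of possibly uncountably many opens). In general, a countable subset that is dense in a closed set need not be $\Pi^{0}_{2}$ there (already $\mathbb{Q}$ fails to be $G_\delta$ in $\mathbb{R}$); whether the sequence is $\Pi^{0}_{2}$ depends on \emph{how its points are chosen}, not merely on density plus Claim~1, so the obstruction you meet for a fixed, arbitrary $H$ is real.

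The missing idea, which is the heart of the paper's proof, is to build the $\Pi^{0}_{2}$ witness into the choice of a \emph{subsequence} rather than to verify it after the fact. From $H$ the paper inductively picks $x_{n+1}\in V_{n+1}\cap H$, where $U_{n+1}=V_n\setminus\bigcup_{i\le n}\da x_i$ and $V_{n+1}=U_{n+1}\cap\bigcap_{i\le n}U_{n+1}(x_i)$, with $(U_k(x_i))_k$ a countable descending local base at $x_i$; irreducibility keeps $V_{n+1}\cap H\neq\emptyset$. The nesting forces $\bigcap_n V_n\subseteq\bigcap_i \ua_X x_i$, i.e.\ any residual point is an upper bound of the whole subsequence $S=\{x_n\mid n\in\mathbb{N}\}$. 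Setting $S_n=\{x_i\mid i\le n\}\cup\bigcap_{i\le n}V_i$, which lies in $\Pi^{0}_{2}(X)$ by first countability, one gets $S=(\bigcap_n S_n)\cap\cl_X(S)$: a point $x$ of the right-hand side outside $S$ would lie in $\bigcap_n V_n$, hence satisfy $\cl_X(S)=\da_X x=A$, contradicting $A\notin\{\da_X x\mid x\in X\}$. So the lateral remainder you were trying to stratify is \emph{empty by construction}, $S\in\Pi^{0}_{2}(X)$, and de Brecht's theorem applied inside $S$ (every neighbourhood of each $x_i$ contains a tail of $S$, so no infinite subspace is Hausdorff and $\mathbb{Q}$ is again excluded) yields $B\in\Pi^{0}_{2}(S)$, hence $B\in\Pi^{0}_{2}(X)$, with $\cl_X(B)=A$. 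If you replace your fixed $H$ by this nested re-extraction, the rest of your argument goes through; without it, the proof is incomplete.
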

\begin{proof}
Let $A\in \mathbf{WD}_{\omega}(X)$ with $A\notin \{\da_X x\mid x\in X\}$. Consider the canonical embedding $\eta_{X}^{\omega-w} \colon X\to X^{\omega-w}: x\mapsto \cl{\{x\}}$. Then $\eta_{X}^{\omega-w}(A)$ is a $\WD_{\omega}$-set of $X^{\omega-w}$. The $\omega$-well-filteredness of $X^{\omega-w}$ induces $\cl_{X^{\omega-w}}(\eta_{X}^{\omega-w}(A))=\da_{X^{\omega-w}} A$. From \cite[Theorem 4.15]{Miao3}, we conclude that $X^{\omega-w}$ is first countable. It follows that there is a countable descending neighbourhood basis $(\mathcal{U}_n)_{n\in \mathbb{N}}$ of $A$. This means that $\mathcal{U}_{n}\cap \eta_{X}^{\omega-w}(A)\neq \emptyset$ for any $n\in \mathbb{N}$. Choose $\eta_{X}^{\omega-w}(a_n)\in \mathcal{U}_{n}\cap \eta_{X}^{\omega-w}(A)$ for any $n\in \mathbb{N}$. Write $\mathcal{B}=\{\eta_{X}^{\omega-w}(a_n)\mid n\in \mathbb{N}\}$. One sees immediately that $\cl_{X^{\omega-w}}(\mathcal{B})=\da _{X^{\omega-w}}A$.

Next, we show that $\cl_X(\{a_n\mid n\in \mathbb{N}\})=A$. 

That $\cl_X(\{a_n\mid n\in \mathbb{N}\})\subseteq A$ is trivial.  Now assume $a\in A$. For any $a\in U$, we have $A\in U^{\omega-w}$. It turns out that $U^{\omega-w} \cap \mathcal{B}\neq \emptyset$. This suggests that $U\cap \{a_n\mid n\in \mathbb{N}\}\neq \emptyset$. Hence, $\cl_X(\{a_n\mid n\in \mathbb{N}\})=A$. Set $H=\{a_n\mid n\in \mathbb{N}\}$. Note that the assumption that $A\notin\{\da _X x\mid x\in X\}$ implies that $H$ is an infinite subset, which means that $\cl_X(B)=\cl_X(H)=A$ for any infinite subset $B$ of $H$. 

Since $X$ is first countable, we can assume that $(U_n(a_k))_{n\in \mathbb{N}}$ is a countable descending neighbourhood basis of $a_k$ for each $k\in \mathbb{N}$. We inductively define an infinite sequence $\{x_{n} \} _{n\in \mathbb{N}}$ of distinct elements of $H$. Write $U_{0}=V_{0}=U_0(a_0)$ and let $x_{0}=a_0$. 
Assume that we have defined a sequence $x_{0},\cdots,x_{n}\in H$ and open sets $U_{0},\cdots, U_{n},V_{0}$, $\cdots, V_{n}$ with $V_{i}\subseteq U_{i},V_{i}\cap H\neq \emptyset$ for any $i\leq n$. We choose $U_{n+1},V_{n+1}$ as follows. We define
\begin{center}
	$U_{n+1}=V_{n}\backslash \bigcup_{i\leq n}\da x_{i}$;
	
	$V_{n+1}=U_{n+1}\cap\bigcap_{i\leq n}U_{n+1}(x_i)$.
\end{center}
Then $V_{n+1}\cap H\neq \emptyset$ as $H$ is irreducible. Pick $x_{n+1}$ to be any element of $V_{n+1}\cap H$. Thus, $S=\{x_{n}\mid n\in \mathbb{N}\}$ is an infinite subset of $H$. For a similar proof of Theorem 4.3 in \cite{Brecht}, $S$ is a countable non-Hausdorff  perfect $T_{D}$ space. Again by applying Theorem 3.5 in \cite{Brecht},  there is $B\in \Pi^{ 0}_{2}(S)$ homeomorphic to either $S_D$ or $S_1$. The infiniteness of $B$ infers that $\cl_X(B)=\cl_X(H)=A$. Now it remains to show that $S\in \Pi^{0}_2(X)$.

Write $S_n=\{x_i\mid i\leq n\}\cup \bigcap_{i\leq n}V_i$ for any $n\in \mathbb{N}$. It follows straightforward that $S_n\in \Pi^0_2(X)$ from the first-countability of $X$. So it suffices to check that $S=(\bigcap_{n\in \mathbb{N}}S_n)\cap \cl_X(S)$. Let $x\in (\bigcap_{n\in \mathbb{N}}S_n)\cap \cl_X(S)$. Assume for the sake of a contradiction that $x\notin S$. It means that $x\in \bigcap_{n\in \mathbb{N}}V_n$. Note that for any $x_i\in S$, $\bigcap_{n\in \mathbb{N}}V_n\subseteq \bigcap_{n\in \mathbb{N}}U_{n}(x_i)=\ua_X x_i$. This yields that $\cl_X(S)=\da_X x=\cl_X(H)=A$, which violates the fact that $A\notin\{\da_X x\mid x\in X\}$. The other direction of inclusion is obvious. 
\end{proof}

Now the following result is a straightforward corollary to the above theorem, and it provides a more refined description for the $\omega$-well-filtered spaces completion. 

\begin{corollary}\label{C}
Let $X$ be a first-countable $T_0$ space. Then $X^{\omega-w}=\{\da_X x\mid x\in X\}\cup \{A\in \Gamma(X)\mid A=\cl_X(B), B\cong S_D\}\cup \{A\in \Gamma(X)\mid A=\cl_X(B), B\cong S_1\}$.
\end{corollary}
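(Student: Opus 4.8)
The plan is to read the corollary as a set equality, since the underlying set of $X^{\omega-w}$ is by construction $\mathbf{WD}_\omega(X)$: we must show that $\mathbf{WD}_\omega(X)$ coincides with the displayed union of the three families. I would prove the two inclusions separately, and the forward inclusion is essentially free from the theorem immediately above. Indeed, given $A\in\mathbf{WD}_\omega(X)$, either $A=\da_X x$ for some $x\in X$, in which case $A$ lies in the first family, or $A\notin\{\da_X x\mid x\in X\}$, in which case the preceding theorem supplies a subspace $B\cong S_D$ or $B\cong S_1$ with $\cl_X(B)=A$, placing $A$ in the second or third family. So the only real content is the reverse inclusion: each of the three families consists of closed $\WD_\omega$-sets.

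For the reverse inclusion there are three cases. The case $\da_X x=\cl_X(\{x\})$ is immediate, since for any continuous $f$ one has $\cl(f(\cl_X\{x\}))=\cl(\{f(x)\})$, so $\da_X x$ is a closed $\WD_\omega$-set. The two cases $A=\cl_X(B)$ with $B\cong S_1$ or $B\cong S_D$ are the heart of the matter. Because $\cl(f(\cl_X B))=\cl(f(B))$ for every continuous $f$, it suffices to show $B$ itself is a $\WD_\omega$-set. Fix a continuous map $f\colon X\to Y$ into an $\omega$-well-filtered space $Y$, and enumerate $B=\{b_i\mid i\in\mathbb N\}$ via the homeomorphism with $\mathbb N$. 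The key elementary fact I would isolate is that in both $S_1$ and $S_D$ every infinite subset is dense; hence for any infinite $S\subseteq\mathbb N$ the set $\{b_i\mid i\in S\}$ is dense in $B$, and continuity of $f|_B$ gives $\cl_Y(\{f(b_i)\mid i\in S\})\supseteq \cl_Y(f(B))$. Setting $Q_n=\sat_Y(f(\{b_i\mid i\geq n\}))$, each $Q_n$ is compact saturated (the set $\{b_i\mid i\geq n\}$ is compact in $B$, hence in $X$), and $(Q_n)_{n\in\mathbb N}$ is a descending, so filtered, countable family in $Q(Y)$.

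I would then check that $\cl_Y(f(B))$ is a $\KF_\omega$-set of $Y$ witnessed by $(Q_n)$: it meets every $Q_n$ since $f(b_n)\in Q_n$, and it is minimal among closed sets doing so. For minimality, if $C\subseteq\cl_Y(f(B))$ is closed and meets every $Q_n$, then a point of $C\cap Q_n$ lies above some $f(b_i)$ with $i\geq n$, and since $C$ is down-closed in the specialization order this forces $f(b_i)\in C$; letting $n$ vary yields $f(b_i)\in C$ for infinitely many $i$, whence the density fact gives $C\supseteq\cl_Y(f(B))$, i.e.\ $C=\cl_Y(f(B))$. By the characterization of $\omega$-well-filteredness from \cite{XiaoquanXu}, every $\KF_\omega$-set of the $\omega$-well-filtered space $Y$ is $\da_Y y$ for a (unique, by $T_0$-ness) point $y$; thus $\cl(f(A))=\cl(f(B))=\cl(\{y\})$, proving $A\in\mathbf{WD}_\omega(X)$.

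I expect the minimality step for $\cl_Y(f(B))$ to be the main obstacle, and the density property of $S_1$ and $S_D$ to be the pivotal tool that makes it go through. One alternative would be to first observe that $\cl_X(B)$ is itself a $\KF_\omega$-set of $X$ (the same density argument applied with $K_n=\ua_X\{b_i\mid i\geq n\}$) and then invoke a general inclusion $\mathbf{KF}_\omega(X)\subseteq\mathbf{WD}_\omega(X)$; I would nevertheless favour the direct route above, since it avoids the delicate transport of minimality across $f$ that a general $\mathbf{KF}_\omega$-to-$\mathbf{WD}_\omega$ lemma would require, where the witnesses for $A$ meeting $K_n$ and for $f^{-1}(C)$ meeting $K_n$ need not be the same point.
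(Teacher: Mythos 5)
Your proposal is correct, and it actually supplies more than the paper does: the paper gives no argument for this statement, presenting it as a ``straightforward corollary'' of the preceding theorem. Your forward inclusion is exactly that straightforward part (point closures fall into the first family, and any $A\in\mathbf{WD}_\omega(X)$ with $A\notin\{\da_X x\mid x\in X\}$ falls into the other two by the theorem). The content you add is the reverse inclusion, which the paper implicitly delegates to known results of Xu et al.\ \cite{XiaoquanXu,XiaoquanXu3}, namely that closures of copies of $S_1$ and $S_D$ are $\KF_\omega$-sets and that closed $\KF_\omega$-sets are $\WD_\omega$-sets. Instead of invoking that general inclusion, you prove precisely the instance needed by pushing the witnessing family forward into the codomain: the saturated tails $Q_n=\sat_Y f(\{b_i\mid i\geq n\})$ form a countable descending family in $Q(Y)$, and the observation that every infinite subset of $S_1$ or $S_D$ is dense (true, since proper closed subsets of both spaces are finite) makes $\cl_Y(f(B))$ a minimal closed set meeting every $Q_n$; then the fact from \cite{XiaoquanXu} that in an $\omega$-well-filtered space every closed $\KF_\omega$-set is a point closure --- the same fact the paper itself invokes in the theorem on Skula closed copies --- finishes the argument. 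Your density device is also the one the paper uses inside the proof of Lemma~\ref{KF} ($\cl_X(E)=\cl_X(H)=A$ for any infinite subset $E$ of $H$), so your route is consistent in spirit with the paper. What it buys is self-containedness: the corollary then rests only on the preceding theorem and one cited characterization, rather than on an external $\mathbf{KF}_\omega(X)\subseteq\mathbf{WD}_\omega(X)$ lemma whose proof (as you note) requires a genuinely more delicate transport of minimality. Note also that your reverse inclusion never uses first countability, which correctly isolates where that hypothesis enters: only in the forward direction, through the theorem.
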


As for second countable $T_0$ spaces, $\omega$-well-filterification coincides with sobrification, this description also applies to sobrification. 
\begin{corollary}
Let $X$ be a second countable $T_0$ space. Then $X^s=X^{\omega-w}=\{\da_X x\mid x\in X\}\cup \{A\in \Gamma(X)\mid, A=\cl_X(B), B\cong S_D\}\cup \{A\in \Gamma(X)\mid A=\cl_X(B), B\cong S_1\}$.
\end{corollary}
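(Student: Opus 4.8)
The plan is to derive this corollary directly from the two immediately preceding results, since for second countable spaces the hypotheses of both collapse onto it. First I would observe that every second countable space is first countable, so Corollary~\ref{C} applies verbatim and already yields the second equality
$$X^{\omega-w}=\{\da_X x\mid x\in X\}\cup \{A\in \Gamma(X)\mid A=\cl_X(B),\ B\cong S_D\}\cup \{A\in \Gamma(X)\mid A=\cl_X(B),\ B\cong S_1\}.$$
Thus the only genuinely new content is the leftmost identification $X^s=X^{\omega-w}$, and everything reduces to showing that the sobrification and the $\omega$-well-filterification of a second countable space are literally the same space.

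For that identification I would invoke the Remark in the $d$-spaces subsection: when $X$ is second countable, the classes of tapered sets, $\WD$-sets, $\WD_\omega$-sets and irreducible sets all coincide. In particular the closed $\WD_\omega$-sets are exactly the irreducible closed sets, so $\mathbf{WD}_\omega(X)=\mathbf{IRR}(X)$ as collections of subsets of $X$. Hence the underlying point-sets of $X^{\omega-w}$ and of the standard sobrification $X^{s}$ are literally equal. It then remains only to compare the topologies, and here the two constructions are given by the identical recipe: $X^{s}$ carries the open sets $U^{s}=\{A\in X^{s}:A\cap U\neq\emptyset\}$ while $X^{\omega-w}$ carries $U^{w}=\{A\in\mathbf{WD}_\omega(X):A\cap U\neq\emptyset\}$, each ranging over $U\in\mathcal{O}(X)$. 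Since the two underlying sets agree and these formulas coincide on them, the generated topologies coincide, so $X^{s}=X^{\omega-w}$ as topological spaces, completing the chain of equalities.

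I expect essentially no obstacle beyond carefully citing the coincidence of set classes from the Remark and checking that the two hull–kernel topologies are defined by the same formula. The one point deserving attention is that the equality $X^{s}=X^{\omega-w}$ genuinely uses second countability rather than mere first countability: it is precisely second countability that forces $\WD_\omega$-sets to be irreducible, so the first step (Corollary~\ref{C}) is valid under first countability but the collapse onto the sobrification in the second step is not, and I would make sure the write-up attributes each equality to the correct hypothesis.
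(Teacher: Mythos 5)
Your proposal is correct and matches the paper's own (very terse) derivation: the paper likewise obtains the displayed description from Corollary~\ref{C} via first countability and then identifies $X^s$ with $X^{\omega-w}$ using the coincidence of irreducible closed sets and closed $\WD_\omega$-sets in second countable spaces (the Remark in the preliminaries, going back to Xu et al.), the topologies being given by the same formula. Your closing observation about which equality needs which countability hypothesis is exactly the right point of care, and nothing further is missing.
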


Moreover, note that $S_1$ is not a tapered closed set. Hence we have the following result for $d$-completions.
\begin{corollary}
Let $X$ be a second countable $T_0$ space. Then $X^d=\{\da_X x\mid x\in X\}\cup \{A\in \Gamma(X)\mid A=\cl_X(B), B\cong S_D\}$.
\end{corollary}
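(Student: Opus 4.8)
The plan is to combine the preceding corollary, which for second countable $X$ identifies $X^{\omega-w}=X^{s}$ as the union of the point closures $\da_X x$, the $S_D$-closures, and the $S_1$-closures, with the containment $X^{d}\subseteq X^{s}$: every tapered set is a $\WD$-set, hence irreducible, so the standard $D$-completion sits inside the standard sobrification as the sub-collection of those irreducible closed sets that happen to be tapered. Thus the whole problem reduces to deciding, among the three types of irreducible closed sets produced by the corollary, exactly which are tapered. I would organize this as the two inclusions $R\subseteq X^{d}$ and $X^{d}\subseteq R$, where $R:=\{\da_X x\mid x\in X\}\cup\{A\in\Gamma(X)\mid A=\cl_X(B),\,B\cong S_D\}$ is the claimed right-hand side.

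For $R\subseteq X^{d}$ I would first isolate the elementary lemma that the closure of a directed set is tapered: if $A=\cl_X(D)$ with $D$ directed in the specialization order and $f\colon X\to Y$ is continuous into a monotone convergence space $Y$, then $f(D)$ is directed, so $s:=\sup f(D)$ exists in the dcpo $Y$; since $\cl_Y(\{s\})=\da s$ is a closed set containing $f(D)$, continuity gives $f(A)\subseteq\cl_Y(f(D))\subseteq\da s$, so $s$ is an upper bound of $f(A)$ and in fact $\sup f(A)=s$. Applying this with $D=\{x\}$ shows each $\da_X x$ is tapered, and applying it with $D=B$ a chain (hence directed) shows each $S_D$-closure is tapered. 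This yields $R\subseteq X^{d}$.

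The inclusion $X^{d}\subseteq R$ is the substantive half, and this is where the remark that $S_1$ is not a tapered closed set does its work. Here I would use the description of the standard $D$-completion as the $d$-closure of $\eta^{d}_X(X)$ realized inside the sobrification $X^{s}$, so that $A\in X^{s}$ belongs to $X^{d}$ exactly when $A$ lies in every sub-dcpo of $X^{s}$ containing the point closures. It therefore suffices to prove that $R$, viewed as a subset of $X^{s}$, is itself a sub-dcpo. Since $X$ is second countable, $X^{s}$ is again countably based, so any directed family in $R$ possessing a supremum in $X^{s}$ admits a countable subfamily that can be arranged as an ascending sequence $D_0\subseteq D_1\subseteq\cdots$ with the same supremum $C=\cl_X(\bigcup_n D_n)$. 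The crux is to show that such a $C$ is again a point closure or an $S_D$-closure, never a genuinely $S_1$-type closure. If $C$ has a maximum then $C=\da_X c$ and we are done; otherwise I would extract from the ascending union a strictly increasing $\omega$-chain whose closure is $C$ and then apply a de Brecht--style extraction (as in the earlier proofs) to this chain---which, being totally ordered, can contain no $\Pi^{0}_{2}$-copy of $\mathbb{Q}$ or of $S_1$---to produce a subspace homeomorphic to $S_D$ with closure $C$, placing $C$ in $R$. The guiding prototype for the whole step is $X=S_1$ itself: here $S_1$ is a $d$-space, so $S_1^{d}=S_1$, while the extra point $\mathbb{N}\in S_1^{s}\setminus S_1^{d}$ is precisely the non-tapered $S_1$-closure that must be excluded.

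The main obstacle I anticipate is exactly this last structural step: proving that a supremum in $X^{s}$ of an ascending sequence of point- and $S_D$-closures cannot be an irreducible closed set realizable \emph{only} as the closure of a copy of $S_1$. The delicate point is not the mere existence of a generating strictly increasing sequence, but upgrading that sequence to an honest $\Pi^{0}_{2}$-copy of $S_D$ inside $C$, since a priori the subspace topology on a strictly ascending chain could be coarser than the Scott topology; this is precisely where second countability and the perfect-$T_D$ extraction machinery of de Brecht are indispensable. Once $R$ is shown to be a sub-dcpo of $X^{s}$, the $d$-closure description gives $X^{d}\subseteq R$, and together with the first inclusion we conclude $X^{d}=R$. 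I would close by observing that the resulting formula simply drops the $S_1$-summand present in the sobrification and $\omega$-well-filterification of the previous two corollaries, matching the stated description.
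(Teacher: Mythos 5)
Your first inclusion is sound: closures of directed sets (in particular point closures and closures of copies of $S_D$) are tapered, by exactly the argument you give, and the reduction of directed suprema to ascending $\omega$-chains via second countability is also correct. The gap is the step you yourself flag as the main obstacle, and it is not a difficulty that de Brecht's extraction machinery can overcome: the needed claim is false. You need that $R$ is closed under suprema of ascending $\omega$-chains in $X^s$, i.e.\ that an increasing union of closures of chains is again the closure of a chain (or a point closure). Consider $X=\omega\times\omega$ with the countable basis
\[
B_i(n,k)=\{(n,k')\mid k'\geq k\}\cup\{(m,k')\mid m>n,\ k'\geq i\},\qquad n,k,i\in\omega.
\]
One checks this is a basis of a second countable $T_0$ topology whose specialization order is the disjoint union of the chains $E_n=\{n\}\times\omega$ (no comparabilities across chains), that each $E_n$ is a copy of $S_D$ with $\cl_X(E_n)=E_0\cup\cdots\cup E_n$, and that these closed sets increase with $\cl_X(\bigcup_n E_n)=X$. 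Since every chain of $X$ lies inside a single $E_n$, no point closure and no $S_D$-closure equals $X$. Yet $X$ is tapered: for any continuous $f\colon X\to Y$ into a monotone convergence space, $s_n:=\sup f(E_n)$ exists because $f(E_n)$ is a chain in a dcpo; $E_n\subseteq\cl_X(E_{n+1})$ gives $f(E_n)\subseteq\cl_Y(f(E_{n+1}))\subseteq\da s_{n+1}$, so $(s_n)$ is increasing; and $\sup f(X)=\sup_n s_n$. Hence $X\in X^d$ while $X\notin R$: an ascending chain of $S_D$-closures in $R$ has as its supremum a set realizable only as the closure of a copy of $S_1$ (the diagonal $\{(n,n)\mid n\in\omega\}$ carries the cofinite topology and is dense in $X$). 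So $R$ is not a sub-dcpo of $X^s$, and your proposed route cannot be completed.

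Note that this does more than break your proof: the same space is a second countable counterexample to the corollary as stated, since $X^d\not\subseteq R$. It also exposes why the paper's own one-line justification is insufficient: the fact that $S_1$ is not tapered \emph{as a subset of itself} does not lift to non-taperedness of $\cl_X(B)$ for a copy $B\cong S_1$ inside a larger space (above, $\cl_X(B)=X$ is tapered; another instance is $B=\mathbb{N}$ inside $S_1^s$, whose closure is a point closure and hence tapered). Incidentally, your worry that the subspace topology on a strictly ascending $\omega$-chain "could be coarser than the Scott topology" is unfounded---a strictly increasing $\omega$-sequence in any $T_0$ space is automatically homeomorphic to $S_D$---but this does not help, because the genuine failure is that no cofinal chain need exist at all in the limit set.
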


\section{An answer to Xu's question}

Recall that a Rudin space is a space in which all closed irreducible sets are KF-sets.
We now give an answer to Problem \ref{rudin} proposed by Xu et al. in \cite{XiaoquanXu2}, as promised in the Introduction. Namely, we will construct a first-countable $T_0$ space that is not a Rudin space. In other words, we find a first-countable $T_0$ space in which not all closed irreducible sets are KF-sets. 

%
% With the aid of \cite[Theorem 4.15]{Miao3}, we would know that $X^{s}=X^{w}=\{\cl(\bigcup \mathcal{D})\mid \mathcal{D}~\mathrm{is}~\mathrm{directed}$ $\mathrm{in}~X^{\omega-w}\}$. Based on the  results,

%%%%%%%%%%%%%%%%%%%%%%%%%%%%%%%
\begin{figure}[htb]
	\centering
	\begin{tikzpicture}[scale=0.8]
		\foreach \x in {0,1,2} \fill (0,\x) circle (2pt);
		\draw (0,0)--(0,2);
		\draw[dashed] (0,2)--(0,4);
		\node (2) at(0.5,1) {\tiny  $(0,1)$};
		\node (1) at(0.5,0) {\tiny $(0,0)$};
		\node (3) at(0.5,2) {\tiny  $(0,2)$};
		%%%%%%%%%%%%%%%%%%
		\foreach \x in {0,1,2} \fill (1,\x) circle (2pt);
		\draw (1,0)--(1,2);
		\draw[dashed] (1,2)--(1,4);
		\node (2) at(1.5,1) {\tiny $(1,1)$};
		\node (1) at(1.5,0) {\tiny  $(1,0)$};
		\node (3) at(1.5,2) {\tiny  $(1,2)$};
		%%%%%%%%%%%%%%%%%%
		\foreach \x in {0,1,2} \fill (2,\x) circle (2pt);
		\draw (2,0)--(2,2);
		\draw[dashed] (2,2)--(2,4);
		\node (2) at(2.5,1) {\tiny $(2,1)$};
		\node (1) at(2.5,0) {\tiny  $(2,0)$};
		\node (3) at(2.5,2) {\tiny  $(2,2)$};
		%%%%%%%%%%%%%%%%%%
		\draw[dashed] (3,0)--(5,0);
		\draw[dashed] (3,1)--(5,1);
		\draw[dashed] (3,2)--(5,2);
		%%%%%%%%%%%%%%%%%%
		\foreach \x in {0,1,2} \fill (5.5,\x) circle (2pt);
		\draw (5.5,0)--(5.5,2);
		\draw[dashed] (5.5,2)--(5.5,4);
		\node (2) at(6,1) {\tiny $(\alpha,1)$};
		\node (1) at(6,0) {\tiny  $(\alpha,0)$};
		\node (3) at(6,2) {\tiny  $(\alpha,2)$};
		%%%%%%%%%%%%%%%%%%
		\foreach \x in {0,1,2} \fill (6.5,\x) circle (2pt);
		\draw (6.5,0)--(6.5,2);
		\draw[dashed] (6.5,2)--(6.5,4);
		\node (2) at(7.2,1) {\tiny $(\alpha+1,1)$};
		\node (1) at(7.2,0) {\tiny  $(\alpha+1,0)$};
		\node (3) at(7.2,2) {\tiny  $(\alpha+1,2)$};
		%%%%%%%%%%%%%%%%%%
		\foreach \x in {0,1,2} \fill (8,\x) circle (2pt);
		\draw (8,0)--(8,2);
		\draw[dashed] (8,2)--(8,4);
		\node (2) at(8.7,1) {\tiny $(\alpha+2,1)$};
		\node (1) at(8.7,0) {\tiny  $(\alpha+2,0)$};
		\node (3) at(8.7,2) {\tiny  $(\alpha+2,2)$};
		%%%%%%%%%%%%%%%%%%%%
		\draw[dashed] (10,0)--(12,0);
		\draw[dashed] (10,1)--(12,1);
		\draw[dashed] (10,2)--(12,2);
\end{tikzpicture}
	\caption{The space $X=[0,\omega_1)\times \mathbb{N}$ in Example \ref{open}}
\end{figure}
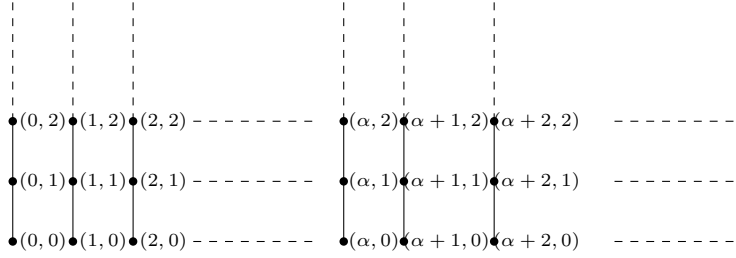
%%%%%%%%%%%%%%%%%%%%%%%%%
\begin{figure}[htb]
	\centering
	\begin{tikzpicture}[scale=0.8]
		\foreach \x in {0,1,2,4} \fill (0,\x) circle (2pt);
		\draw (0,0)--(0,2);
		\draw[dashed] (0,2)--(0,4);
		\node (2) at(0.5,1) {\tiny  $\da (0,1)$};
		\node (1) at(0.5,0) {\tiny $\da (0,0)$};
		\node (3) at(0.5,2) {\tiny  $\da(0,2)$};
		\node (4) [above] at(0,4) {\tiny  $\{0\}\times \mathbb{N}$};
		%%%%%%%%%%%%%%%%%%
		\foreach \x in {0,1,2,4.5} \fill (1,\x) circle (2pt);
		\draw (1,0)--(1,2);
		\draw[dashed] (1,2)--(1,4.5);
		\node (2) at(1.5,1) {\tiny $\da(1,1)$};
		\node (1) at(1.5,0) {\tiny  $\da(1,0)$};
		\node (3) at(1.5,2) {\tiny  $\da(1,2)$};
		\node (4) [above] at(1,4.5) {\tiny  $\{0,1\}\times \mathbb{N}$};
		%%%%%%%%%%%%%%%%%%
		\foreach \x in {0,1,2,5} \fill (2,\x) circle (2pt);
		\draw (2,0)--(2,2);
		\draw[dashed] (2,2)--(2,5);
		\node (2) at(2.5,1) {\tiny $\da(2,1)$};
		\node (1) at(2.5,0) {\tiny  $\da(2,0)$};
		\node (3) at(2.5,2) {\tiny  $\da(2,2)$};
		\node (4) [above] at(2,5) {\tiny  $\{0,1,2\}\times \mathbb{N}$};
		%%%%%%%%%%%%%%%%%%
		\draw[dashed] (3,0)--(5,0);
		\draw[dashed] (3,1)--(5,1);
		\draw[dashed] (3,2)--(5,2);
		%%%%%%%%%%%%%%%%%%
		\foreach \x in {0,1,2,6.5} \fill (5.5,\x) circle (2pt);
		\draw (5.5,0)--(5.5,2);
		\draw[dashed] (5.5,2)--(5.5,6.5);
		\node (2) at(6,1) {\tiny $\da(\alpha,1)$};
		\node (1) at(6,0) {\tiny  $\da(\alpha,0)$};
		\node (3) at(6,2) {\tiny  $\da(\alpha,2)$};
		\node (4) [above] at(5.5,6.5) {\tiny  $[0, \alpha]\times \mathbb{N}$};
		%%%%%%%%%%%%%%%%%%
		\foreach \x in {0,1,2,7} \fill (6.5,\x) circle (2pt);
		\draw (6.5,0)--(6.5,2);
		\draw[dashed] (6.5,2)--(6.5,7);
		\node (2) at(7.3,1) {\tiny $\da(\alpha+1,1)$};
		\node (1) at(7.3,0) {\tiny  $\da(\alpha+1,0)$};
		\node (3) at(7.3,2) {\tiny  $\da(\alpha+1,2)$};
		\node (4) [above] at(6.5,7) {\tiny  $[0, \alpha+1]\times \mathbb{N}$};
		%%%%%%%%%%%%%%%%%%
		\foreach \x in {0,1,2,7.7} \fill (8.1,\x) circle (2pt);
		\draw (8.1,0)--(8.1,2);
		\draw[dashed] (8.1,2)--(8.1,7.7);
		\node (2) at(8.9,1) {\tiny $\da(\alpha+2,1)$};
		\node (1) at(8.9,0) {\tiny  $\da(\alpha+2,0)$};
		\node (3) at(8.9,2) {\tiny  $\da(\alpha+2,2)$};
		\node (4) [above] at(8.1,7.7) {\tiny  $[0, \alpha+2]\times \mathbb{N}$};
		%%%%%%%%%%%%%%%%%%%%
		\draw[dashed] (10,0)--(12,0);
		\draw[dashed] (10,1)--(12,1);
		\draw[dashed] (10,2)--(12,2);
		%%%%%%%%%%%%%%%%%%
		\draw (0,4)--(1,4.5);
		\draw (1,4.5)--(2,5);
		\draw[dashed] (2,5)--(5.5,6.5);
		\draw(5.5,6.5)--(6.5,7);
		\draw(6.5,7)--(8.1,7.7);
			\draw[dashed] (8.2,7.7)--(11.5,8.5);
	\end{tikzpicture}
	\caption{The standard $\omega$-well-filterification of $X$ in Example \ref{open}}
\end{figure}
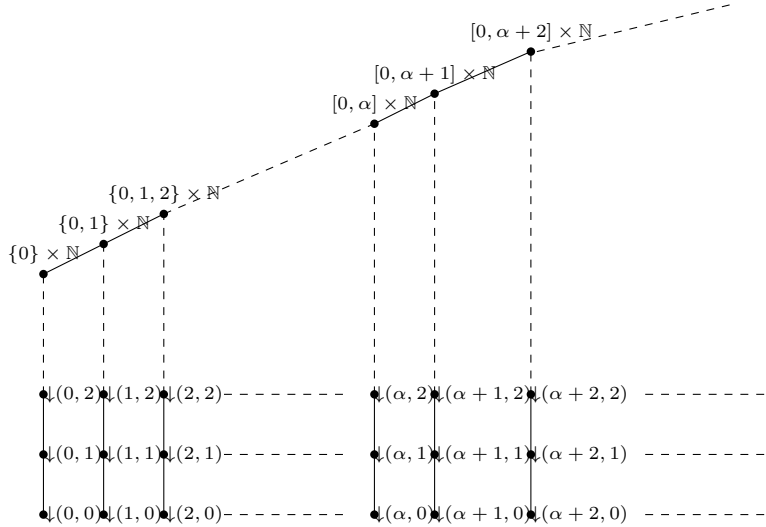
%%%%%%%%%%%%%%%%%%%%%%%%%%
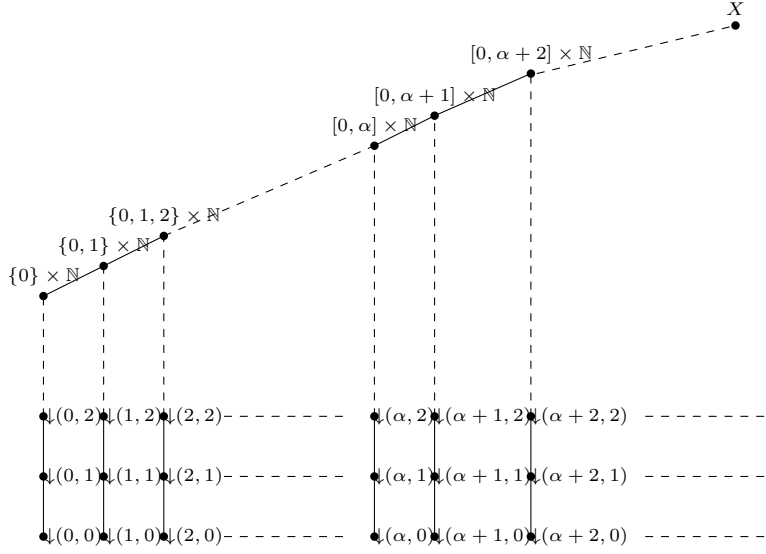
\begin{figure}[htb]
	\centering
	\begin{tikzpicture}[scale=0.8]
		\foreach \x in {0,1,2,4} \fill (0,\x) circle (2pt);
		\draw (0,0)--(0,2);
		\draw[dashed] (0,2)--(0,4);
		\node (2) at(0.5,1) {\tiny  $\da (0,1)$};
		\node (1) at(0.5,0) {\tiny $\da (0,0)$};
		\node (3) at(0.5,2) {\tiny  $\da(0,2)$};
		\node (4) [above] at(0,4) {\tiny  $\{0\}\times \mathbb{N}$};
		%%%%%%%%%%%%%%%%%%
		\foreach \x in {0,1,2,4.5} \fill (1,\x) circle (2pt);
		\draw (1,0)--(1,2);
		\draw[dashed] (1,2)--(1,4.5);
		\node (2) at(1.5,1) {\tiny $\da(1,1)$};
		\node (1) at(1.5,0) {\tiny  $\da(1,0)$};
		\node (3) at(1.5,2) {\tiny  $\da(1,2)$};
		\node (4) [above] at(1,4.5) {\tiny  $\{0,1\}\times \mathbb{N}$};
		%%%%%%%%%%%%%%%%%%
		\foreach \x in {0,1,2,5} \fill (2,\x) circle (2pt);
		\draw (2,0)--(2,2);
		\draw[dashed] (2,2)--(2,5);
		\node (2) at(2.5,1) {\tiny $\da(2,1)$};
		\node (1) at(2.5,0) {\tiny  $\da(2,0)$};
		\node (3) at(2.5,2) {\tiny  $\da(2,2)$};
		\node (4) [above] at(2,5) {\tiny  $\{0,1,2\}\times \mathbb{N}$};
		%%%%%%%%%%%%%%%%%%
		\draw[dashed] (3,0)--(5,0);
		\draw[dashed] (3,1)--(5,1);
		\draw[dashed] (3,2)--(5,2);
		%%%%%%%%%%%%%%%%%%
		\foreach \x in {0,1,2,6.5} \fill (5.5,\x) circle (2pt);
		\draw (5.5,0)--(5.5,2);
		\draw[dashed] (5.5,2)--(5.5,6.5);
		\node (2) at(6,1) {\tiny $\da(\alpha,1)$};
		\node (1) at(6,0) {\tiny  $\da(\alpha,0)$};
		\node (3) at(6,2) {\tiny  $\da(\alpha,2)$};
		\node (4) [above] at(5.5,6.5) {\tiny  $[0, \alpha]\times \mathbb{N}$};
		%%%%%%%%%%%%%%%%%%
		\foreach \x in {0,1,2,7} \fill (6.5,\x) circle (2pt);
		\draw (6.5,0)--(6.5,2);
		\draw[dashed] (6.5,2)--(6.5,7);
		\node (2) at(7.3,1) {\tiny $\da(\alpha+1,1)$};
		\node (1) at(7.3,0) {\tiny  $\da(\alpha+1,0)$};
		\node (3) at(7.3,2) {\tiny  $\da(\alpha+1,2)$};
		\node (4) [above] at(6.5,7) {\tiny  $[0, \alpha+1]\times \mathbb{N}$};
		%%%%%%%%%%%%%%%%%%
		\foreach \x in {0,1,2,7.7} \fill (8.1,\x) circle (2pt);
		\draw (8.1,0)--(8.1,2);
		\draw[dashed] (8.1,2)--(8.1,7.7);
		\node (2) at(8.9,1) {\tiny $\da(\alpha+2,1)$};
		\node (1) at(8.9,0) {\tiny  $\da(\alpha+2,0)$};
		\node (3) at(8.9,2) {\tiny  $\da(\alpha+2,2)$};
		\node (4) [above] at(8.1,7.7) {\tiny  $[0, \alpha+2]\times \mathbb{N}$};
		%%%%%%%%%%%%%%%%%%%%
		\draw[dashed] (10,0)--(12,0);
		\draw[dashed] (10,1)--(12,1);
		\draw[dashed] (10,2)--(12,2);
		%%%%%%%%%%%%%%%%%%
		\draw (0,4)--(1,4.5);
		\draw (1,4.5)--(2,5);
		\draw[dashed] (2,5)--(5.5,6.5);
		\draw(5.5,6.5)--(6.5,7);
		\draw(6.5,7)--(8.1,7.7);
		\draw[dashed] (8.2,7.7)--(11.5,8.5);
		\fill (11.5,8.5) circle (2pt);
		\node (5) [above] at (11.5,8.5) {\tiny  $X$};
	\end{tikzpicture}
	\caption{The standard well-filterification and sobrification of $X$ in Example \ref{open}}\label{f1}
\end{figure}

\begin{example}\label{open}
Let $X=[0,\omega_1)\times \mathbb{N}$, where $\omega_1$ denotes the first uncountable ordinal number and $\mathbb{N}$ the set of natural numbers. 
For $\alpha \in [0,\omega_1)$, we use $\ua \alpha$ to denote the set of all ordinals that are larger than or equal to $\alpha$; and for $n\in \mathbb N$, $\ua n$ is the set of 
all natural numbers that are larger than or equal to $n$. For a subset $U$ of $X$, we define 
$U_{[0,\omega_1)}=\{\alpha\in [0,\omega_1)\mid (\{\alpha\}\times \mathbb{N})\cap U\neq \emptyset\}$, and $\mathcal{B}$ consists of all subsets $U$ of $X$ such that

\begin{itemize}
\item $U_{[0,\omega_1)}=\ua \alpha_{0}$ for some $\alpha_{0} \in [0,\omega_1)$;
\item $U\cap (\{\alpha_0\}\times \mathbb{N}) =  \{\alpha_0\} \times \ua n $ for some $n\in \mathbb N$;
\item for $\beta >\alpha_0$, $U\cap (\{\beta\}\times \mathbb{N})= \{\beta\} \times \ua n_0$ for some fixed $n_0\in \mathbb N$.
\end{itemize}
%$$=\{U\subseteq X\mid U_{[0,\omega_1)}=\ua \alpha_{0}, U\cap (\{\alpha_0\}\times \mathbb{N})\in \sigma(\mathbb{N}), \exists n_0\in \mathbb{N}, s.t. \forall \beta >\alpha_0, U\cap (\{\beta\}\times \mathbb{N})=\ua n_0\}.$$
Then $\mathcal{B}$ is a basis. Let $\tau$ be the topology generated by $\mathcal{B}$. Then $(X,\tau)$ is first-countable, and $X\in \mathbf{IRR}(X)\backslash \mathbf{WF}(X)$.
\end{example}
\begin{proof}
Our proofs go as follows. 
\begin{enumerate}
\item $\mathcal{B}$ is a basis.\\
It is easy to see that $X\in \mathcal{B}$. We prove that $\mathcal{B}$ is closed for finite intersections. To this end, let $U_1,U_2\in \mathcal{B}$. Then there are $\alpha_1,\alpha_2\in [0,\omega_1)$ and $n_0,n_1\in \mathbb{N}$ such that $(U_i)_{[0,\omega_1)}=\ua \alpha_i$, $U_i\cap(\{\beta\}\times \mathbb{N})=\ua n_i$ for any $\beta >\alpha_i$, $i\in\{1,2\}$. Set $\alpha_0=\max\{\alpha_1,\alpha_2\}$ and $n_0=\max\{n_1,n_2\}$. Then $(U_1\cap U_2)_{[0,\omega_1)}=\ua \alpha_0$, and for any $\beta>\alpha_0$, we have that $(U_1\cap U_2)\cap (\{\beta\}\times \mathbb{N})=\ua n_0$, and $(U_1\cap U_2)\cap (\{\alpha_0\}\times \mathbb{N})$ is of course of the form $\ua k$ for some $k\in \mathbb{N}$. This means that $(U_1\cap U_2)\in \mathcal{B}$.

\item $(X,\tau)$ is first-countable.\\
For any $(\alpha,n)\in X$, set $U_{k}=\{\alpha\}\times \ua n\cup \bigcup_{\beta>\alpha}\{\beta\}\times \ua k$ for $k\in \mathbb{N}$. Then $U_k$ is open in $(X,\tau)$ for each $k\in \mathbb N$ and $(\alpha,n)\in U_k$. Next, we show that $(U_k)_{k\in \mathbb{N}}$ form a neighbourhood basis of $(\alpha,n)$. Let $U\in \mathcal{B}$ with $(\alpha,n)\in \mathcal{B}$. Then there exists $\alpha_0\in [0,\omega_1)$, $n_{0}\in \mathbb{N}$ such that $U_{[0,\omega_1)}=\ua \alpha_0$, $U\cap (\{\beta\}\times \mathbb{N})=\ua n_0$ for $\beta > \alpha_0$. This means that $(\alpha,n)\in U_{n_0+1}\subseteq U$. Hence, $(X,\tau)$ is first-countable.

\item $K$ is a countable set for each $K\in Q(X)$.\\
Suppose that there exists an uncountable compact saturated subset $K$ of $X$. Set $L_n=[0,\omega_1)\times \{n\}$ for any $n\in \mathbb{N}$. Then  $X=\bigcup_{n\in \mathbb{N}}L_n$. This means that $K=\bigcup_{n\in \mathbb{N}}(K\cap L_n)$. The assumption that $K$ is uncountable implies that for some $n\in \mathbb{N}$, $K\cap L_n$ is uncountable. Pick a sequence $(\alpha_k,n)_{k\in \mathbb{N}}$ inside $K\cap L_n$. The compactness guarantees that there exists a cluster point $(\beta_0,n_0)$ of the sequence $(\alpha_k,n)_{k\in \mathbb{N}}$. Note that $(\beta_0,n_0)\in V$, where $V=\{\beta_0\}\times \ua n_0\cup \bigcup_{\beta>\beta_0}\{\beta\}\times \ua (n+1)$ is an open set. It follows that $V_{n+1}\cap \{(\alpha_k,n)\mid n\in \mathbb{N}\}$ is infinite. But that cannot be true by definition. 

\item $X\in \mathbf{IRR}(X)\backslash \mathbf{WF}(X)$. \\
One easily verifies that any two non-empty subsets of $X$ actually intersect, so that $X\in \mathbf{IRR}(X)$. 
We now show that $X\notin \mathbf{KF}(X)$ and proceed by contradiction. Suppose that $X\in \mathbf{KF}(X)$. Then there exists a filtered family $(K_i)_{i\in I}$ of $Q(X)$ such that $X$ is a minimal closed set that intersects all members of $\mathcal{K}$. Fix $i_0\in I$, then $K_{i_0}$ is countable by~$(3)$, and then we can find $\alpha\in [0,\omega_1)$ such that $K_{i_0}\subseteq [0,\alpha]\times \mathbb{N}$. Let $B_{\alpha}=[0,\alpha]\times \mathbb{N}$. Obviously, $B_{\alpha}$ is a closed set of $X$ and $B_{\alpha}$ intersects all members of $\mathcal{K}$. This would have implied that $X=B_{\alpha}$ by minimality of $X$, which is impossible.
\end{enumerate}
\end{proof}

%Example \ref{open} reveals that the standard sobrification may not be represented by $S_D, S_1$. This is because $X^s$ may not be $A_1$ although $X$ is $A_1$. It can induce that $X^s$ may not have the same characterization as $X^{\omega-w}$ in Corollary \ref{C} even if the underlying space is $A_1$. But if $X^s$ is $A_1$, we can get the same characterization as $X^{\omega-w}$ with the similar proof. Now we can capture the following theorem immediately.

\section{A topological characterization for $\omega$-well-filterification}
In \cite{GHKLMS}, Gierz et al. presented a characterization for the standard sobrification via the strong topology. In this section, we give a characterization for the standard $\omega$-well-filterification by the strong$_*$ topology that we introduce below. 
\begin{lemma}
Let $X$ be a $T_0$ space and $$\mathcal{B}=\{A\subseteq X\mid \text{for all}~B\in \KF_{\omega}(A)~\text{with}~\sup B~\text{existing}, \sup B\in A\}.$$ Then $\mathcal{B}$ consists of closed sets of a topology $\tau$ on $X$, which we call the \emph{strong$_*$ topology} of $X$.
\end{lemma}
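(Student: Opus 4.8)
The plan is to verify that $\mathcal{B}$ satisfies the three axioms for the closed sets of a topology: it contains $X$ and $\emptyset$, it is closed under arbitrary intersections, and it is closed under finite unions. Throughout I read $\KF_{\omega}(A)$ as the collection of (not necessarily closed) subsets $B\subseteq A$ having the $\KF_{\omega}$ property in $X$, and $\sup B$ as the supremum taken in the specialization order of $X$. That $X\in\mathcal{B}$ is immediate, and $\emptyset\in\mathcal{B}$ holds vacuously, since every $\KF_{\omega}$-set is nonempty and hence $\KF_{\omega}(\emptyset)=\emptyset$.

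The arbitrary-intersection case is routine. Let $A=\bigcap_{j\in J}A_{j}$ with each $A_{j}\in\mathcal{B}$, and let $B\in\KF_{\omega}(A)$ with $\sup B$ existing. Since $B\subseteq A\subseteq A_{j}$, the set $B$ is itself a $\KF_{\omega}$-subset of each $A_{j}$, so $A_{j}\in\mathcal{B}$ yields $\sup B\in A_{j}$; intersecting over $j$ gives $\sup B\in A$, whence $A\in\mathcal{B}$.

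The finite-union case is the heart of the argument, and I expect it to be the main obstacle. By induction it suffices to treat $A=A_{1}\cup A_{2}$ with $A_{1},A_{2}\in\mathcal{B}$. Fix $B\in\KF_{\omega}(A)$ with $\sup B$ existing, witnessed by a countable filtered family $\mathcal{K}$ for which $\cl_{X}(B)$ is a minimal closed set intersecting every member of $\mathcal{K}$. Writing $B=(B\cap A_{1})\cup(B\cap A_{2})$ and taking closures gives $\cl_{X}(B)=\cl_{X}(B\cap A_{1})\cup\cl_{X}(B\cap A_{2})$. Because every $\KF_{\omega}$-set is in particular a $\KF$-set, its closure $\cl_{X}(B)$ is irreducible, so $\cl_{X}(B)=\cl_{X}(B\cap A_{i})$ for some $i\in\{1,2\}$. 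This common closure is still minimal closed intersecting every member of $\mathcal{K}$, so $B\cap A_{i}$ is itself a $\KF_{\omega}$-subset of $A_{i}$ (with the same witnessing family $\mathcal{K}$). As $B\cap A_{i}$ and $B$ share the same closure, they have the same upper bounds and hence $\sup(B\cap A_{i})=\sup B$; applying $A_{i}\in\mathcal{B}$ gives $\sup B=\sup(B\cap A_{i})\in A_{i}\subseteq A$. Thus $A\in\mathcal{B}$.

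Three small facts underpin the union step and are where the real work lies: that the closure of a $\KF_{\omega}$-set is irreducible (which follows from the filteredness of $\mathcal{K}$ exactly as for ordinary $\KF$-sets), that $\sup$ depends only on the closure (because $u$ is an upper bound of a set if and only if that set lies in the closed set $\cl_{X}(\{u\})=\da_{X}u$), and that intersecting with $A_{i}$ preserves the $\KF_{\omega}$ property once the closure is unchanged, with the same countable witnessing family. One point worth flagging is that $B\cap A_{i}$ is in general \emph{not} closed even when $B$ is; this is precisely why the defining condition of $\mathcal{B}$ must quantify over all $\KF_{\omega}$-subsets of $A$ rather than only the closed ones.
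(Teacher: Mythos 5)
Your proof hinges on the interpretive choice you announce at the outset: that $\KF_{\omega}(A)$ means the subsets of $A$ having the $\KF_{\omega}$ property \emph{in $X$}. That is not the notion the lemma uses. In the paper, $B\in\KF_{\omega}(A)$ means that $B$ has the $\KF_{\omega}$ property in the space $A$ itself: there must be a countable filtered family $\mathcal{K}\subseteq Q(A)$ of compact saturated subsets \emph{of the subspace $A$}, such that $\cl_{A}(B)$ is minimal among closed subsets \emph{of $A$} meeting every member of $\mathcal{K}$. This intrinsic reading is forced by the theorem that follows the lemma (a subspace $Y$ of an $\omega$-well-filtered space $X$ is $\omega$-well-filtered if and only if it is strong$_*$ closed): its proof applies the $\omega$-well-filteredness of the subspace $Y$ to members of $\KF_{\omega}(Y)$, which only makes sense for the intrinsic notion; under your relativized reading the forward direction of that theorem would not go through. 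So your argument, even if internally sound, proves a different statement about a different (in general smaller) family $\mathcal{B}$.

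Under the correct reading, your union step breaks at the sentence ``so $B\cap A_{i}$ is itself a $\KF_{\omega}$-subset of $A_{i}$ (with the same witnessing family $\mathcal{K}$)''. The family $\mathcal{K}$ consists of compact saturated subsets of $A_{1}\cup A_{2}$; its members need not be contained in $A_{i}$ at all, so they cannot serve as witnesses inside the space $A_{i}$, and there is no cheap fix: manufacturing a witnessing family that lives in $A_{i}$ is the entire content of the paper's proof. The paper does this by choosing $x_{n}\in\cl_{A\cup B}(E)\cap K_{n}$, setting $H=\{x_{n}\mid n\in\mathbb{N}\}$, showing (as in its earlier lemma on $\mathbf{KF}_{\omega}$-sets) that $H\setminus U$ is finite for every open $U$ meeting $H$ --- so that \emph{every} subset of $H$ is compact --- then arguing by pigeonhole that, say, $H\cap A_{1}$ is infinite with $\cl_{A\cup B}(H\cap A_{1})=\cl_{A\cup B}(E)$, and finally taking the countable descending family $(\ua_{A_{1}}H_{n})_{n}$, where $H_{n}$ is the $n$-th tail of an enumeration of $H\cap A_{1}$, as the new witnessing family in $Q(A_{1})$; even then, verifying that $\cl_{A_{1}}(H\cap A_{1})$ is minimal with respect to this new family is a nontrivial step that leans on the minimality of $\cl_{A\cup B}(E)$. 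Your decomposition $\cl(E)=\cl(E\cap A_{1})\cup\cl(E\cap A_{2})$ together with irreducibility is a reasonable substitute for the paper's pigeonhole argument (and your intersection step is fine, though under the intrinsic reading it needs the fact that continuous maps, here the inclusions, preserve $\KF_{\omega}$-sets rather than mere monotonicity of the defining condition), but without constructing compact saturated witnesses inside $A_{i}$, the key step of the lemma is missing.
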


%\begin{lemma}
%$\tau$ is a topology of all closed sets.
\begin{proof}
Let $(A_i)_{i\in I}$ be a family of $\mathcal{B}$, $B\in \KF_{\omega}(\bigcap_{i\in I}A_i)$ with $\sup B$ existing. The fact that continuous mappings preserve $\KF_{\omega}$-sets means that $B\in \KF_{\omega}(A_i)$ for any $i\in I$. It turns out that $\sup B\in A_i$ because $A_i\in \mathcal{B}$ for each $i\in I$. By  the definition of $\mathcal{B}$, we have that $\bigcap_{i\in I}A_i\in \mathcal{B}$.

Let $A,B\in \mathcal{B}$, $E\in \KF_{\omega}(A\cup B)$ with $\sup E$ existing. Now it suffices to confirm that $\sup E\in A\cup B$. The case that $\cl_{A\cup B}(E)\in \{\da_{A\cup B} x\mid x\in A\cup B\}$ is trivial, so we assume that $\cl_{A\cup B}(E)\notin \{\da _{A\cup B}x\mid x\in A\cup B\}$. Then there exists a descending countable family $\mathcal{K}$ of $Q(A\cup B)$ such that $\cl_{A\cup B}(E)$ is a minimal closed set that intersects all members of $\mathcal{K}$. Set $\mathcal{K}=\{K_n\mid n\in \mathbb{N}\}$. Pick $x_n\in \cl_{A\cup B}(E)\cap K_n$ for any $n\in \mathbb{N}$. Define $H=\{x_n\mid n\in \mathbb{N}\}$. From the proof of Lemma~\ref{KF}, we could know that $H\backslash U$ is finite for every $U\in \mathcal{O}(X)$ with $U\cap H\neq \emptyset$ and $H$ is an infinite subset. It follows that $F$ is compact for every subset $F$ of $H$. Note that $H\subseteq \cl_{A\cup B}(E)\subseteq A\cup B$. This leads to the fact that $H\cap A$ is infinite or $H\cap B$ is infinite. Without loss of generality, we assume that $H\cap A$ is infinite, and then we would know that $\cl_{A\cup B}(H\cap A)=\cl_{A\cup B}(H)=\cl_{A\cup B}(E)$, which follows easily that $\sup E=\sup (H\cap A)$.
Next, we claim that $H\cap A$ is a $\KF_{\omega}$-set of $A$.
Let $H\cap A=\{y_n\mid n\in \mathbb{N}\}$ and $H_n=\{y_m\mid m\geq n\}$ for any $n\in \mathbb{N}$. Then $H_n$ is compact and $\cl_A(H\cap A)\cap \ua_A H_n\neq \emptyset$ for any $n\in \mathbb{N}$. Now we deduce that $\cl_A(H\cap A)$ is a minimal closed set in $A$ that intersects $\ua_A H_n$ for  $n\in \mathbb{N}$. Suppose $R\subseteq \cl_A(H\cap A)$ is closed in $A$ and $R\cap \ua_A H_n\neq \emptyset$ for any $n\in \mathbb{N}$. It follows that $R\cap H_n\neq \emptyset$ for each $n\in \mathbb{N}$ because $R$ is a lower set of $A$. This implies that $R\cap K_n\neq \emptyset$ for any $n\in \mathbb{N}$. Note that $R\subseteq \cl_A(H\cap A)\subseteq \cl_{A\cup B}(H)=\cl_{A\cup B}(E)$. Then $\cl_{A\cup B}(R)\subseteq \cl_{A\cup B}(E)$. The minimality of $\cl_{A\cup B}(E)$ implies that $\cl_{A\cup B}(R)=\cl_{A\cup B}(E)$. It turns out that $\cl_A(H\cap A)\subseteq \cl_{A\cup B}(H)\cap A=\cl_{A\cup B}(E)\cap A=\cl_{A\cup B}(R)\cap A=\cl_{A}(R)=R$. Hence, $H\cap A\in \KF_{\omega}(A)$. The assumption that $A$ belongs to $\mathcal{B}$ ensures $\sup (H\cap A)=\sup E\in A\subseteq A\cup B$. So $A\cup B\in \mathcal{B}$.
\end{proof}

\begin{theorem}
Let $X$ be an $\omega$-well-filtered space. Then $Y$ is an $\omega$-well-filtered subspace of $X$ if and only if $Y$ is strong$_*$ closed.
\begin{proof}
Assume $Y$ is an $\omega$-well-filtered subspace of $X$. We need to verify that $Y$ is strong$_*$ closed. For every $A\in \KF_{\omega}(Y)$ with $\sup A$ existing. In virtue of the $\omega$-well-filteredness of $Y$, we can identify some $x\in Y$ such that $\cl_Y(A)=\da_Y x$. This ensures that $\sup A=x\in Y$.

For the converse, let $Y$ be a strong$_*$ closed subset of $X$. We show that $Y$ is an $\omega$-well-filtered space. To this end, assume $A\in \KF_{\omega}(Y)$. We know $A\in \KF_{\omega}(X)$ since continuous mappings preserve $\KF_{\omega}$-sets. The fact that $X$ is $\omega$-well-filtered implies that $\sup A$ exists and $\cl_X(A)=\da_X \sup A$, see \cite{XiaoquanXu} for example. The strong$_*$ closedness of $Y$ implies that $\sup A\in Y$. This yields that $\cl_Y(A)=\da_{Y}\sup A$. Therefore, $Y$ is an $\omega$-well-filtered space.
\end{proof}
\end{theorem}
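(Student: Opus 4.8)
The plan is to prove both implications by reducing to the characterization of $\omega$-well-filteredness established in \cite{XiaoquanXu}, namely that a $T_0$ space is $\omega$-well-filtered precisely when every one of its closed $\KF_{\omega}$-sets is the closure of a (unique) point, together with the already-invoked fact that continuous maps---in particular the subspace inclusion $Y\hookrightarrow X$---preserve $\KF_{\omega}$-sets. Throughout I will use that the specialization order of the subspace $Y$ is the restriction of that of $X$, and the subspace-closure identity $\cl_Y(-)=\cl_X(-)\cap Y$. The only genuinely delicate point is to check that the point witnessing $\omega$-well-filteredness \emph{inside} $Y$ is exactly the supremum that is computed in the \emph{ambient} space $X$ appearing in the definition of the strong$_*$ topology.

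For the forward implication, suppose $Y$ is an $\omega$-well-filtered subspace and let $B\in \KF_{\omega}(Y)$ be such that $\sup B$ exists in $X$. Applying the characterization to the closed $\KF_{\omega}$-set $\cl_Y(B)$ of $Y$, I obtain $y\in Y$ with $\cl_Y(B)=\da_Y y$. I then verify $y=\sup_X B$: since $\le_Y$ is the restriction of $\le_X$, the point $y$ is an upper bound of $B$ in $X$; and for any upper bound $z$ of $B$ in $X$ one has $B\subseteq \da_X z$, whence $\cl_X(B)\subseteq \da_X z$, and in particular $y\in \cl_Y(B)\subseteq \cl_X(B)\subseteq \da_X z$ gives $y\le_X z$. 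Thus $y$ is the least upper bound, so $\sup_X B=y\in Y$; as $B$ was arbitrary, $Y$ is strong$_*$ closed.

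For the converse, let $Y$ be strong$_*$ closed and take an arbitrary closed $\KF_{\omega}$-set $A$ of $Y$. Pushing $A$ forward along the continuous inclusion $Y\hookrightarrow X$ shows $\cl_X(A)\in \mathbf{KF}_{\omega}(X)$, so by the $\omega$-well-filteredness of $X$ there is $x\in X$ with $\cl_X(A)=\da_X x$, that is, $x=\sup_X A$. Since $A\in \KF_{\omega}(Y)$ and this supremum exists, the strong$_*$ closedness of $Y$ forces $x\in Y$. Finally I compute $\cl_Y(A)=\cl_X(A)\cap Y=\da_X x\cap Y=\da_Y x$, using $x\in Y$, so $A=\da_Y x$ is the closure in $Y$ of the point $x$. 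Because $A$ was an arbitrary closed $\KF_{\omega}$-set of $Y$, the cited characterization yields that $Y$ is $\omega$-well-filtered.

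I expect the main obstacle to be bookkeeping rather than conceptual: reconciling the internal witness produced by $Y$'s $\omega$-well-filteredness (a maximum of $\cl_Y(B)$, i.e.\ a supremum taken in $Y$) with the external supremum taken in $X$ that governs the strong$_*$ topology. Both the least-upper-bound verification in the forward direction and the closure computation $\cl_Y(A)=\da_X x\cap Y=\da_Y x$ in the backward direction hinge on the two compatibility facts about subspaces noted above, and once these are in place each direction is immediate.
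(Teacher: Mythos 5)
Your proposal is correct and takes essentially the same route as the paper's own proof: both directions reduce to the characterization from \cite{XiaoquanXu} that $\omega$-well-filteredness means every closed $\KF_{\omega}$-set is a point closure, combined with preservation of $\KF_{\omega}$-sets under the continuous inclusion $Y\hookrightarrow X$ and the subspace compatibilities $\cl_Y(-)=\cl_X(-)\cap Y$ and $\leq_Y\,=\,\leq_X\!\restriction_Y$. The only difference is that you spell out the least-upper-bound verification (that the witness $y$ with $\cl_Y(B)=\da_Y y$ really is $\sup_X B$) and the closure computation $\cl_Y(A)=\da_X x\cap Y=\da_Y x$, which the paper leaves implicit.
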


\begin{remark}
Because sober subspaces of $X$ $\omega$-well-filtered, it is clear that the strong topology is coarser than the strong$_*$ topology.
\end{remark}

By applying the above theorem and \cite[Theorem 4.4]{Keimel}, we arrive at the following theorem, which is the main result of this section.
\begin{theorem}
Let $X$ be a $T_0$ space. Then $\mathbf{WD}_{\omega}(X)=\cl_{s_*}(\eta_X^{\omega-w}(X))$, where $\cl_{s_*}(\eta_X^{\omega-w}(X))$ denotes the strong$_*$ closure of $\eta_X^{\omega-w}(X)$ in $X^s$.
\end{theorem}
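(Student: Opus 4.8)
The plan is to realise both sides of the claimed identity as subsets of the sobrification $X^s$ and to prove the two inclusions separately, using the preceding theorem to translate ``strong$_*$-closed'' into ``$\omega$-well-filtered subspace''. First I would record the ambient picture: since every $\WD_\omega$-set is irreducible, $\mathbf{WD}_{\omega}(X)$ is a subset of $X^s$, and the defining open sets $U^w$ of $X^{\omega-w}$ are exactly $U^s\cap \mathbf{WD}_{\omega}(X)$, so $X^{\omega-w}$ carries precisely the subspace topology inherited from $X^s$. Moreover $X^s$ is sober, hence $\omega$-well-filtered, so the preceding theorem applies with $X^s$ as the ambient space. Throughout I identify $\eta_X^{\omega-w}(X)$ with $\eta_X^{s}(X)=\{\cl_X(\{x\})\mid x\in X\}$.

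For the inclusion $\cl_{s_*}(\eta_X^{\omega-w}(X))\subseteq \mathbf{WD}_{\omega}(X)$ I would argue that $\mathbf{WD}_{\omega}(X)$ is already strong$_*$-closed in $X^s$. Indeed $\mathbf{WD}_{\omega}(X)=X^{\omega-w}$ is an $\omega$-well-filtered space and, by the topology computation above, an $\omega$-well-filtered subspace of the $\omega$-well-filtered space $X^s$; the preceding theorem then makes it strong$_*$-closed. Since each $\cl_X(\{x\})=\da_X x$ is a $\WD_\omega$-set (its continuous image into any $\omega$-well-filtered space has closure $\cl_X(\{f(x)\})$), the set $\eta_X^{\omega-w}(X)$ lies in $\mathbf{WD}_{\omega}(X)$, and minimality of the strong$_*$-closure gives the inclusion.

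The reverse inclusion is the heart of the matter, and I would obtain it by playing the universal property of $\mathbf{WD}_{\omega}(X)$ against the sober ambient $X^s$ twice. Write $Y=\cl_{s_*}(\eta_X^{\omega-w}(X))$; being a strong$_*$-closure in the $\omega$-well-filtered space $X^s$, $Y$ is itself $\omega$-well-filtered by the preceding theorem, and the corestriction $j\colon X\to Y$ of $\eta_X^{s}$ is continuous. Applying the universal property of $X^{\omega-w}$ to $j$ yields a unique continuous $\bar\jmath\colon \mathbf{WD}_{\omega}(X)\to Y$ with $\bar\jmath\circ\eta_X^{\omega-w}=j$. Now compose with the inclusion $\iota_Y\colon Y\hookrightarrow X^s$: both $\iota_Y\circ\bar\jmath$ and the inclusion $\iota_W\colon\mathbf{WD}_{\omega}(X)\hookrightarrow X^s$ are continuous maps into the $\omega$-well-filtered space $X^s$ whose precomposition with $\eta_X^{\omega-w}$ equals $\eta_X^{s}$. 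Since $X^s$ is $\omega$-well-filtered, the uniqueness clause of the universal property of $\mathbf{WD}_{\omega}(X)$, applied to $\eta_X^{s}\colon X\to X^s$, forces $\iota_Y\circ\bar\jmath=\iota_W$. Reading this off pointwise gives $\bar\jmath(A)=A$ for every $A\in\mathbf{WD}_{\omega}(X)$, so $A=\bar\jmath(A)\in Y$, i.e. $\mathbf{WD}_{\omega}(X)\subseteq Y$, and equality follows.

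The delicate point is not any single estimate but the passage from an abstract isomorphism of reflections to an honest equality of subsets of $X^s$: this is exactly what the second, uniqueness-driven application of the universal property secures, and it is the step I would be most careful to state correctly. I would also double-check the topology identification of the first paragraph, since the applicability of the preceding theorem to $\mathbf{WD}_{\omega}(X)$ and to $Y$ rests on their being genuine subspaces of $X^s$. Alternatively, one can package the conclusion by verifying directly that $(Y,j)$ is an $\omega$-well-filterification and invoking the abstract reflection-as-closure machinery of \cite[Theorem 4.4]{Keimel}; the extra content needed for that route is the strong$_*$-continuity of the sobrification-lifted maps $X^s\to Z^s$, which follows because continuous maps between sober spaces preserve $\KF_\omega$-sets together with their suprema.
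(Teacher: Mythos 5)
Your proposal is correct, and it proves exactly what the paper asserts, but it executes the second half differently from the paper. The paper's proof is a one-line combination of the preceding theorem (strong$_*$-closed subsets of an $\omega$-well-filtered space are precisely its $\omega$-well-filtered subspaces) with Keimel--Lawson's Theorem~4.4, i.e.\ the abstract ``reflection $=$ closure of the image inside any completion'' machinery, transported from the $D$-completion setting; this leaves implicit both the verification that that machinery applies to $\omega$-well-filtered spaces and the passage from ``$\cl_{s_*}(\eta_X^{\omega-w}(X))$ is \emph{an} $\omega$-well-filterification'' to the set-theoretic equality with $\mathbf{WD}_{\omega}(X)$ inside $X^s$. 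You replace the citation by a hands-on two-inclusion argument: (a) the identification $U^{w}=U^{s}\cap \mathbf{WD}_{\omega}(X)$ exhibits $X^{\omega-w}$ as a genuine subspace of $X^s$, so the preceding theorem makes $\mathbf{WD}_{\omega}(X)$ strong$_*$-closed and minimality of the closure gives one inclusion; (b) the closure $Y$ is an $\omega$-well-filtered subspace, so the universal property of $X^{\omega-w}$ produces $\bar\jmath\colon \mathbf{WD}_{\omega}(X)\to Y$, and the uniqueness clause applied to $\eta^{s}_X\colon X\to X^s$ forces $\iota_Y\circ\bar\jmath$ to coincide with the inclusion $\mathbf{WD}_{\omega}(X)\hookrightarrow X^s$, yielding the reverse inclusion. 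What each route buys: the paper's citation is shorter and stresses that the same pattern handles $D$-completion, well-filterification and sobrification uniformly; your argument is self-contained, avoids checking the hypotheses of the Keimel--Lawson framework in the new setting (e.g.\ strong$_*$-continuity of maps between $\omega$-well-filtered spaces, which you correctly flag as the extra content of that route), and, importantly, it directly secures equality of subsets of $X^s$ rather than a mere isomorphism of reflections --- precisely the delicate point you identify, and one the paper's terse proof glosses over.
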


\section{Sobriety and second countable quasi-metric spaces}
Every second countable $T_0$ space $X$ is a quasi-metrizable space.
Explicitly, such a (compatible) quasi-metric can be chosen to be Wilson's quasi-metric: $$ d(x,y)=\sup_{n\in \mathbb N,  x\in U_n, y\notin U_n} 1/ 2^n,$$
where $x, y\in X$ and $U_n, n\in \mathbb N$ are enumerations for a countable basis of $X$.  The reader could  refer \cite[Theorem 6.3.13]{GL} for more details. 
In \cite{XiaoquanXu2}, Xu et al. showed that $X^s=X^w=X^{\omega-w}$ for second countable $T_0$ spaces $X$, which is equivalent to saying that $X$ is $\omega$-well-filtered if and only if  it is well-filtered or sober when $X$ is second countable. In this section we look at those equivalence from a quasi-metric viewpoint. By means of Cauchy sequences with respect to the aforementioned (compatible) quasi-metric, we give characterizations for the standard $D$-completion, $\omega$-well-filterification, well-filterification, and sobrification when the underlying $T_0$ space is second-countable.
\begin{definition}
A countable subset $S$ of a $T_0$ space is said to be a \emph{$\Pi^{0}_2$-Cauchy subset} if $S\in \Pi^0_2(X)$ and  points of $S$ form a Cauchy sequence with respect to some compatible quasi-metic on $X$. The set of all $\Pi^{0}_2$-Cauchy subsets is denoted by $\Pi^{0}_2(CX)$.
\end{definition}
\begin{lemma}\label{Cauchy}
Let $X$ be a second countable $T_0$ space and $d$ a compatible quasi-metric on $X$ which generates the topology on $X$.
For every $A\in \mathbf{IRR}(X)$ with $A\notin \{\da_X x\mid x\in X\}$, there exists a $\Pi^{0}_2$-Cauchy subset $B$ of $A$ which is homeomorphic to $S_D$ or $S_1$ such that $\cl_X(B)=A$. 
\end{lemma}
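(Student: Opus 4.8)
The plan is to reduce $A$ to a descending family of compact saturated sets and then imitate the inductive selection from the preceding first-countable theorem, inserting at each stage one extra constraint---membership in a shrinking $d$-ball---that forces the selected points to be $d$-Cauchy. For the reduction, note that since $X$ is second countable it is a Rudin space, and by the Remark the classes of tapered, $\WD$-, $\WD_{\omega}$-, irreducible and $\KF_{\omega}$-sets all coincide; hence $A\in\mathbf{KF}_{\omega}(X)$. Exactly as in Lemma~\ref{KF} this yields a descending chain $(K_n)_{n\in\mathbb{N}}$ in $Q(X)$ for which $A$ is a minimal closed set meeting every $K_n$. Choosing $x_n\in A\cap K_n$ and putting $H=\{x_n\mid n\in\mathbb{N}\}$, the arguments of Lemma~\ref{KF} give that $H$ is infinite with $\cl_X(H)=A$, that $H\setminus U$ is finite whenever $U$ is open with $U\cap H\neq\emptyset$ (Claim~1), and that $\da_X x\cap H$ is finite for each $x\in H$ (this is where $A\notin\{\da_X x\mid x\in X\}$ enters). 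In particular every nonempty relatively open subset of $H$ is cofinite in $H$.

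Next I would run the inductive construction. Fix, for each $y\in H$, a descending countable neighbourhood basis $(U_k(y))_{k\in\mathbb{N}}$ (available since $X$ is first countable), and recall that every ball $B_d(x,\varepsilon)$ is open. Set $x_0\in H$, $V_0=U_0(x_0)$, and, given $x_0,\dots,x_n$ and $V_0,\dots,V_n$, define
\[
U_{n+1}=V_n\setminus\textstyle\bigcup_{i\le n}\da_X x_i,\qquad V_{n+1}=U_{n+1}\cap\textstyle\bigcap_{i\le n}U_{n+1}(x_i)\cap B_d\!\left(x_n,1/2^n\right).
\]
The ball factor is the sole addition to the earlier construction. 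Each factor of $V_{n+1}$ is open and meets $H$ (it contains some $x_i$ or $x_n$), hence is cofinite in $H$ by Claim~1; since the removed down-sets $\da_X x_i$ meet $H$ in finite sets, $V_{n+1}\cap H$ remains cofinite, so is infinite. Pick $x_{n+1}\in V_{n+1}\cap H$ distinct from $x_0,\dots,x_n$, and set $S=\{x_n\mid n\in\mathbb{N}\}$.

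Finally I would verify the three properties of $S$ and cut down to $B$. For \emph{$d$-Cauchyness}, the choice $x_{n+1}\in B_d(x_n,1/2^n)$ gives $d(x_n,x_{n+1})<1/2^n$, so the quasi-metric triangle inequality telescopes to $d(x_n,x_m)\le\sum_{j=n}^{m-1}d(x_j,x_{j+1})<1/2^{\,n-1}$ for $m>n$; thus $(x_n)_n$, and every subsequence of it, is Cauchy. For \emph{$\Pi^0_2$-ness}, writing $S_n=\{x_i\mid i\le n\}\cup\bigcap_{i\le n}V_i\in\Pi^0_2(X)$, the inclusion $\bigcap_n V_n\subseteq\bigcap_n U_n(x_i)=\ua_X x_i$ forces $S=(\bigcap_n S_n)\cap\cl_X(S)$ just as in the first-countable theorem, so $S\in\Pi^0_2(X)$. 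For the \emph{topological type}, $S$ is a nonempty countably based perfect $T_D$ space (perfectness and the $T_D$ property follow from Claim~1 and the finiteness of $\da_X x\cap H$, as in Claim~2 of Lemma~\ref{KF}), so by \cite[Theorem~3.5]{Brecht} it contains a $\Pi^0_2$-subspace $B$ homeomorphic to $\mathbb{Q}$, $S_1$ or $S_D$, and Claim~1 rules out $\mathbb{Q}$. Since $B\in\Pi^0_2(S)$ and $S\in\Pi^0_2(X)$, we get $B\in\Pi^0_2(X)$; $B$ is infinite, so $\cl_X(B)=\cl_X(S)=A$; and the points of $B$ form a subsequence of the $d$-Cauchy sequence, hence are $d$-Cauchy. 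Thus $B$ is the desired $\Pi^0_2$-Cauchy subset.

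I expect the main obstacle to be keeping the three requirements mutually compatible throughout the induction: the real content is that inserting the ball $B_d(x_n,1/2^n)$ does not destroy $V_{n+1}\cap H\neq\emptyset$, which is exactly where the cofiniteness supplied by Claim~1 is indispensable. Once the selection survives, the Cauchy conclusion falls out of the telescoping geometric estimate afforded by the quasi-metric triangle inequality, and the $\Pi^0_2$ and $S_1/S_D$ parts are unchanged from the first-countable theorem.
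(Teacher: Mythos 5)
Your overall strategy is sound and, once a countable dense sequence $H\subseteq A$ with the cofiniteness property is in hand, your argument tracks the paper's own proof closely: both run the de Brecht-style induction $U_{n+1}=V_n\setminus\bigcup_{i\le n}\da_X x_i$ with shrinking $d$-balls inserted into $V_{n+1}$, verify $S\in\Pi^0_2(X)$ via the sets $S_n=\{x_i\mid i\le n\}\cup\bigcap_{i\le n}V_i$, and invoke \cite[Theorem 3.5]{Brecht} together with cofiniteness to rule out $\mathbb{Q}$ and land on $S_1$ or $S_D$. Your Cauchy bookkeeping (a single ball $B_d(x_n,1/2^n)$ around the last point plus a telescoping triangle inequality) differs harmlessly from the paper's choice $V_{n+1}=U_{n+1}\cap\bigcap_{i\le n}B_d(x_i,\frac{1}{n+2})$, where the balls around \emph{all} previous points serve simultaneously as the countable neighbourhood bases (making the abstract $U_{n+1}(x_i)$ factors unnecessary) and as the Cauchy constraint.

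The genuine gap is your very first step, the claim $A\in\mathbf{KF}_{\omega}(X)$. Neither fact you cite delivers it: the Remark asserts only that tapered, $\WD$-, $\WD_\omega$- and irreducible sets coincide for second countable spaces (it says nothing about $\KF_\omega$-sets), and the Rudin property of second countable spaces gives $A\in\mathbf{KF}(X)$, i.e.\ a witnessing filtered family of \emph{arbitrary} cardinality. Everything downstream in your reduction --- reindexing $\mathcal K$ as a descending chain $(K_n)_{n\in\mathbb{N}}$, choosing $x_n\in A\cap K_n$, and deriving Claim~1 --- is possible only because the family is countable, and there is no easy passage from $\KF$ to $\KF_\omega$ (one cannot just extract a countable subfamily, since minimality may be destroyed). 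So the gap sits directly beneath the load-bearing part of your argument. The needed fact (every second countable $T_0$ space is $\omega$-Rudin) is true, but it must be proved or correctly cited; for a proof, enumerate the basic open sets $(B_{n_j})_{j\in\mathbb{N}}$ meeting $A$, use irreducibility to pick $x_k\in A\cap\bigcap_{j\le k}B_{n_j}$, and check that $K_k=\ua\{x_m\mid m\ge k\}$ is a descending chain of compact saturated sets for which $A$ is the minimal closed set meeting every member. Note that this repair essentially rediscovers the paper's own reduction, which sidesteps Rudin-type input altogether: since $X^s$ is second countable, the point $A\in X^s$ has a countable descending neighbourhood basis $(\mathcal U_n)_{n\in\mathbb{N}}$, and picking $\eta^s_X(a_n)\in\mathcal U_n\cap\eta^s_X(A)$ already produces a dense sequence $H=\{a_n\}$ in $A$; the induction is then kept alive by irreducibility of $H$ together with the hypothesis $A\notin\{\da_X x\mid x\in X\}$, with no compact sets needed.
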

\begin{proof}
Let $A\in \mathbf{IRR}(X)$ and $A\notin \{\da_X x\mid x\in X\}$. Then $\eta_{X}^{s}(A)=\{\da_X x\mid x\in A\}$ is irreducible in $X^s$ and $\cl_{X^s}(\eta_{X}^{s}(A)) =\da_{X^s} A$. Because $X$ is $A_2$, the space $X^s$ is $A_2$. It follows that there is a countable descending neighbourhood basis $(\mathcal{U}_n)_{n\in \mathbb{N}}$ of $A$. This means that $\mathcal{U}_{n}\cap \eta_{X}^{s}(A)\neq \emptyset$ for any $n\in \mathbb{N}$. Choose $\eta_{X}^{s}(a_n)\in \mathcal{U}_{n}\cap \eta_{X}^{s}(A)$ for each $n\in \mathbb{N}$. Write $\mathcal{B}=\{\eta_{X}^{s}(a_n)\mid n\in \mathbb{N}\}$. One sees immediately that $\cl_{X^s}(\mathcal{B})=\da_{X^s} A$.

\begin{enumerate}

\item $\cl_X(\{a_n\mid n\in \mathbb{N}\})=A$. \\
The implication from left to right is trivial, so assume $a\in A$. For any $a\in U$, we have $A\in U^{s} = \{B\in X^s \mid B\cap U \not= \emptyset\}$. It turns out that $U^{s} \cap \mathcal{B}\neq \emptyset$. This implies  that $U\cap \{a_n\mid n\in \mathbb{N}\}\neq \emptyset$. Set $H=\{a_n\mid n\in \mathbb{N}\}$. Note that the assumption $A\notin\{\da_X x\mid x\in X\}$ results in the infiniteness of $H$. It follows that $\cl_X(B)=\cl_X(H)=A$ for any infinite subset $B$ of $H$. 

\item 
Since $X$ is a quasi-metric space, we can know that $B_d(a_k, \frac{1}{n+1})_{n\in \mathbb{N}}$ is a countable descending neighbourhood basis of $a_k$ for any $k\in \mathbb{N}$. We inductively define an infinite sequence $\{x_{n} \} _{n\in \mathbb{N}}$ of distinct elements of $H$. Write $U_{0}=V_{0}=B_d(a_0,1)$ and let $x_{0}=a_0$.
Let $n\geq 0$, and assume that we have defined a sequence $x_{0},\cdots,x_{n}\in H$ and open sets $U_{0},\cdots, U_{n},V_{0}$, $\cdots, V_{n}$ with $V_{i}\subseteq U_{i},V_{i}\cap H\neq \emptyset$ for any $i\leq n$. We choose $U_{n+1},V_{n+1}$ as follows. We define
\begin{center}
	$U_{n+1}=V_{n}\backslash \bigcup_{i\leq n}\da x_{i}$;
	
	$V_{n+1}=U_{n+1}\cap\bigcap_{i\leq n}B_d(x_i, \frac{1}{n+2})$.
\end{center}
Then $V_{n+1}\cap H\neq \emptyset$ by the irreducibility of $H$. Pick $x_{n+1}$ from $V_{n+1}\cap H$. Thus, $S=\{x_{n}\mid n\in \mathbb{N}\}$ is an infinite subset of $H$.

\item The set $\{x_{n}\mid n\in \mathbb{N}\}$ froms a Cauchy sequence.\\
For any real number $ \varepsilon  > 0$, we let $n_0, m,n\in \mathbb{N}$ be such that $\frac{1}{n_0+1}<\varepsilon$ and $n\geq m\geq n_0$. Then $x_n\in B_d(x_m,\frac{1}{n+1})$. It means that $d(x_m,x_n)<\frac{1}{n+1}\leq \frac{1}{n_0+1}< \varepsilon$. Hence, $x_{n}, n\in \mathbb{N}$ is a Cauchy sequence.

\item $S\in \Pi^{0}_2(X)$.\\
Write $S_n=\{x_i\mid i\leq n\}\cup \bigcap_{i\leq n}V_i$ for $n\in \mathbb{N}$. It follows straightforward that $S_n\in \Pi^0_2(X)$ since $X$ is first-countable. So it suffices to check that $S=(\bigcap_{n\in \mathbb{N}}S_n)\cap \cl_X(S)$. The implication from left to right is trivial, so let $x\in (\bigcap_{n\in \mathbb{N}}S_n)\cap \cl_X(S)$. Assume for the sake of a contradiction that $x\notin S$. This means that $x\in \bigcap_{n\in \mathbb{N}}V_n$. Note that, for any $x_i\in S$, $\bigcap_{n\in \mathbb{N}}V_n\subseteq \bigcap_{n\in \mathbb{N}}B_d(x_i,\frac{1}{n+1})=\ua_X x_i$. It yields that $\cl_X(S)=\da _X x=\cl_X (H)=A$, which violates the fact that $A\notin\{\da_X x\mid x\in X\}$.
\end{enumerate}
For a similar proof of Theorem 4.3 in \cite{Brecht}, $S$ is a countable non-$T_{2}$ perfect $T_{D}$ space. Again by applying Theorem 3.5 in \cite{Brecht},  there is $B\in \Pi^{ 0}_{2}(S)$ homeomorphic to either $S_D$ or $S_1$. It turns out that $B\in \Pi^0_2(X)$ by~(4). As $B$ is infinite, $\cl_X(B)=\cl_X(H)=A$. The fact that each subsequence of a Cauchy sequence is a Cauchy sequence implies that $B$ is a Cauchy sequence. And the proof is complete. 
\end{proof}

Now the following theorems are easy to check. 

\begin{theorem}
Let $X$ be a second countable $T_0$ space and $d$ a compatible quasi-metric on $X$. Then $X^s=X^w=X^{\omega-w}=\{\cl_X(S)\mid S\in\Pi^{0}_2(CX)~\&~S=\{x\},x\in X\}\cup \{\cl_X(S)\mid S\in \Pi^{0}_2(CX)~\& ~S\cong S_D\}\cup \{\cl_X(S)\mid S\in \Pi^{0}_2(CX)~\&~ S\cong S_1\}$.
\end{theorem}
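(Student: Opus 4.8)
The plan is to collapse the displayed chain of equalities to the single set identity $\mathbf{IRR}(X)=\mathcal R$, where $\mathcal R$ denotes the right-hand side of the statement. The first two equalities $X^s=X^w=X^{\omega-w}$ are exactly the cited result of Xu et al.\ for second countable $T_0$ spaces, and by construction the underlying set of $X^s$ is $\mathbf{IRR}(X)$, the collection of nonempty irreducible closed subsets of $X$. Thus everything reduces to proving the two inclusions between $\mathbf{IRR}(X)$ and $\mathcal R$, and the work is essentially bookkeeping on top of Lemma~\ref{Cauchy}.

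For the inclusion $\mathcal R\subseteq\mathbf{IRR}(X)$, I would verify that every member of $\mathcal R$ is irreducible and closed. Each such member is a closure $\cl_X(S)$, hence closed; and since the closure of an irreducible subspace is again irreducible, it suffices to observe that $\{x\}$, $S_D$, and $S_1$ are irreducible. A singleton is trivially irreducible, while both $S_D$ and $S_1$ are irreducible because any two of their nonempty open sets meet: in the cofinite topology of $S_1$ the complements of open sets are finite, and in the Scott topology of $S_D$ every nonempty open set is an upper set containing a tail of $\mathbb{N}$. Hence $\cl_X(S)\in\mathbf{IRR}(X)$ in all three cases.

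For the reverse inclusion, I would take an arbitrary $A\in\mathbf{IRR}(X)$ and split into two cases. If $A=\da_X x$ for some $x\in X$, then $A=\cl_X(\{x\})$, and I must place $\{x\}$ in $\Pi^{0}_2(CX)$: the constant sequence at $x$ is trivially Cauchy with respect to $d$, and $\{x\}$ is $\Pi^{0}_2$ because $\{x\}=\da_X x\cap\ua_X x$, where $\da_X x$ is closed and, by second countability, $\ua_X x=\bigcap_{x\in U_n}U_n$ is a $G_\delta$; since $\Pi^{0}_2(X)$ contains the closed sets and the $G_\delta$ sets and is closed under finite intersection, $\{x\}\in\Pi^{0}_2(CX)$, placing $A$ in the first block of $\mathcal R$. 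If instead $A\notin\{\da_X x\mid x\in X\}$, then Lemma~\ref{Cauchy} directly supplies a $\Pi^{0}_2$-Cauchy subset $B\subseteq A$ with $B\cong S_D$ or $B\cong S_1$ and $\cl_X(B)=A$, placing $A$ in the second or third block of $\mathcal R$.

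The theorem is therefore a corollary of Lemma~\ref{Cauchy} together with the cited coincidence of the three completions, and the only genuinely new verification is the membership $\{x\}\in\Pi^{0}_2(CX)$. Accordingly the main (and quite minor) obstacle is the $\Pi^{0}_2$-ness of singletons, which hinges on writing $\ua_X x$ as a countable intersection of basic open sets; this is the precise point at which second countability enters in this step and is the one detail that would otherwise be easy to overlook.
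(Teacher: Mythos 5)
Your proposal is correct and follows exactly the route the paper intends: the paper leaves this theorem as a direct consequence of Lemma~\ref{Cauchy} together with the cited coincidence $X^s=X^w=X^{\omega-w}$ for second countable spaces, with the forward inclusion split into the point-closure case and the case handled by Lemma~\ref{Cauchy}, and the reverse inclusion following from irreducibility of singletons, $S_D$, and $S_1$. Your additional verification that $\{x\}=\da_X x\cap\ua_X x\in\Pi^0_2(X)$ (via second countability making $\ua_X x$ a $G_\delta$) is precisely the small detail the paper's ``easy to check'' glosses over, and it is handled correctly.
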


\begin{theorem}
Let $X$ be a second countable $T_0$ space and $d$ a compatible quasi-metric on $X$. Then $X^d=\{\cl_X(S)\mid S\in\Pi^{0}_2(CX)~\&~S=\{x\},x\in X\}\cup \{\cl_X(S)\mid S\in \Pi^{0}_2(CX)~\& ~S\cong S_D\}$.
\end{theorem}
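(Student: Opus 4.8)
The plan is to read the description off the immediately preceding theorem, which already identifies $X^s=X^w=X^{\omega-w}$ with the union of the closures of the singleton, $S_D$-type, and $S_1$-type members of $\Pi^0_2(CX)$. Since every tapered set is irreducible (by the Remark in Section~2), $X^d\subseteq X^s$ as families of closed sets, so every $A\in X^d$ already appears in that union, i.e.\ $A=\cl_X(S)$ for some $S\in\Pi^0_2(CX)$ that is a singleton, a copy of $S_D$, or a copy of $S_1$. The theorem thus reduces to showing that the \emph{tapered} members are exactly the singleton- and $S_D$-closures, which I would do by isolating two facts: \textbf{(A)} closures of singletons and of $S_D$-type subsets are tapered, and \textbf{(B)} the closure of a non-principal $S_1$-type subset is never tapered.

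Fact~(A) is routine and gives the inclusion $\supseteq$. For a singleton, $\cl_X(\{x\})=\da_X x$, and any continuous $f\colon X\to Y$ into a monotone convergence space satisfies $f(\da_X x)\subseteq\cl_Y(f(x))=\da_Y f(x)$, so $\sup f(\da_X x)=f(x)$ exists. If $S\cong S_D$ then $S$ is a chain in the specialization order, hence directed, so $f(S)$ is directed and $\sup f(S)$ exists in the dcpo $Y$; as $\da_Y(\sup f(S))=\cl_Y(\sup f(S))$ is closed and contains $f(S)$, continuity gives $f(\cl_X(S))\subseteq\cl_Y(f(S))\subseteq\da_Y(\sup f(S))$, whence $\sup f(\cl_X(S))=\sup f(S)$ exists. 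Thus $\cl_X(S)$ is tapered and lies in $X^d$; note that only the chain structure of $S$, not its Cauchyness, is used here.

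For the inclusion $\subseteq$ I would take $A\in X^d$, so $A$ is tapered, hence irreducible, hence of one of the three listed forms. If $A$ is a singleton- or $S_D$-closure it already lies in the right-hand family. The remaining possibility is that $A$ is a non-principal $S_1$-closure; by Fact~(B) such an $A$ is not tapered, contradicting $A\in X^d$. Hence this case is vacuous for tapered sets, and $A$ lies in the right-hand family. (In particular Facts~(A) and~(B) jointly show that no non-principal closed set is at once an $S_D$- and an $S_1$-closure, so the two presentations of $A$ are compatible.)

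The crux, and the only real work, is Fact~(B); this is precisely the content behind the remark in Section~3 that $S_1$ is not a tapered closed set, and one must keep the non-principality hypothesis, since a principal closure may happen to be an $S_1$-closure (for instance the generic point of $S_1^s$). I would prove it by producing a \emph{non-sober} monotone convergence space $Y$ and a continuous $f\colon X\to Y$ for which $\sup f(A)$ fails to exist. The most economical target is $S_1$ itself, which is a $d$-space whose specialization order is an antichain, so it would suffice to exhibit a continuous map $X\to S_1$ that is non-constant on $A$, any image of size at least two in an antichain having no supremum; such a map can be assembled from the cofinite separation property of the $S_1$-copy established in the proof of Lemma~\ref{KF} (every open set meeting the copy is cofinite in it). Alternatively, and perhaps more transparently, I would identify $X^d$ with the $d$-closure of $\eta^s_X(X)$ inside the sobrification $X^s$ and show that the join of the antichain $\{\da_X s:s\in S\}$ is not the supremum of any directed family of strictly smaller elements of $X^d$; here the dichotomy between the directed set $S_D$ (whose closure is a genuine directed limit that the $D$-completion supplies) and the antichain $S_1$ (whose putative limit the $D$-completion never adds) is what carries the argument.
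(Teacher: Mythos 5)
Your Fact~(A) is fine and gives the inclusion $\supseteq$, but Fact~(B) is false, and this is a genuine gap, not a fixable technicality. Taperedness of $\cl_X(B)$ is a property of $B$ \emph{relative to the ambient space} $X$, not an intrinsic property of the subspace $B$: the observation that $S_1$ is not tapered in itself (via the identity map) says nothing about continuous maps defined on all of $X$, and such maps can be forced to collapse an $S_1$-copy whenever its closure also contains a dense chain. Concretely, let $X=\mathbb{N}\times\{0,1\}$ with basic open sets $U_{F,k}=\left((\mathbb{N}\setminus F)\times\{0\}\right)\cup\{(m,1)\mid m\geq k\}$ for $F\subseteq\mathbb{N}$ finite and $k\in\mathbb{N}$. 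This space is second countable and $T_0$; the set $B_0=\mathbb{N}\times\{0\}$ is a copy of $S_1$ (traces of opens on it are exactly the cofinite sets) and is $\Pi^0_2$, since $B_0=\bigcap_{k}U_{\emptyset,k}$; the set $B_1=\mathbb{N}\times\{1\}$ is a copy of $S_D$ (a chain whose traces of opens are the final segments); and $\cl_X(B_0)=\cl_X(B_1)=X$. For any continuous $f$ into a monotone convergence space $Y$ one has $f(X)=f(\cl_X(B_1))\subseteq\cl_Y(f(B_1))=\da \sup f(B_1)$, so $\sup f(X)=\sup f(B_1)$ exists: $X$ is tapered, non-principal, and is the closure of a $\Pi^0_2$ copy of $S_1$. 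This refutes Fact~(B), and also your parenthetical claim that no non-principal closed set is simultaneously an $S_D$- and an $S_1$-closure. Both of your suggested proofs of~(B) break on this space: every continuous map $X\to S_1$ is constant (the image of the dense chain $B_1$ is a directed subset of an antichain, hence a single point, and density forces $f(X)$ into its closure), and the $D$-completion \emph{does} add the putative limit of the antichain, namely as the directed supremum of $\{\da(m,1)\mid m\in\mathbb{N}\}$. So the step ``the non-principal $S_1$ case is vacuous'' collapses, and with it your proof of $\subseteq$; the most one could hope for is the weaker claim that every non-principal tapered closed set admits \emph{some} $S_D$-witness.

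That weaker claim, and hence the printed theorem, appears to be false as well, so no repair along these lines can succeed. Take $X=\mathbb{N}\times\mathbb{N}$ with basic opens $U_{(i,j),n}=\{(i,k)\mid k\geq j\}\cup\{(i',k)\mid i'>i,\ k\geq n\}$ (one checks these are closed under finite intersection). This space is second countable and $T_0$; each column $C_i=\{i\}\times\mathbb{N}$ is a copy of $S_D$ with $\cl_X(C_i)=\{0,\dots,i\}\times\mathbb{N}$, and $X$ itself is irreducible and non-principal. Moreover $X$ is tapered: for continuous $f$ into a monotone convergence space, the points $y_i=\sup f(C_i)$ exist and increase with $i$ (because $C_i\subseteq\cl_X(C_{i+1})$, so $f(C_i)\subseteq\da y_{i+1}$), and the upper bounds of $f(X)=\bigcup_i f(C_i)$ are exactly the upper bounds of $\{y_i\mid i\in\mathbb{N}\}$, so $\sup f(X)=\sup_i y_i$ exists. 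Yet any subspace of $X$ homeomorphic to $S_D$ is a chain in the specialization order, hence lies in a single column, hence has closure contained in some $\{0,\dots,i\}\times\mathbb{N}\neq X$. So $X\in X^d$ but $X$ is not in the right-hand side of the statement. For comparison, the paper does not actually prove this theorem either: it is declared ``easy to check,'' resting on the earlier remark that ``$S_1$ is not a tapered closed set,'' which is precisely the intrinsic-versus-ambient confusion identified above. In short: your $\supseteq$ direction is correct, your $\subseteq$ direction rests on a false lemma, and the equality itself fails, so the correct response to this exercise is a counterexample rather than a proof.
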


\begin{theorem}
Let $X$ be a second countable $T_0$ space and $d$ a compatible quasi-metric on $X$. Then $X$ is sober if and only if for each Cauchy sequence $(a_n)_{n\in \mathbb{N}}$ in $X$ converges with respect to the metric topology $\tau_{\hat{d}}$ (see Section~\ref{qps-section}), where $S=\{a_n\mid n\in \mathbb{N}\}$ is homeomorphic to $S_D$ or $S_1$.
\end{theorem}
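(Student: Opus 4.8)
The plan is to reduce both directions to a single dictionary entry: for a Cauchy sequence $(a_n)$ whose underlying set $S=\{a_n\mid n\in\mathbb N\}$ is homeomorphic to $S_D$ or $S_1$, I will show that $(a_n)$ converges in $\tau_{\hat{d}}$ if and only if $\cl_X(S)=\da_X x$ holds for the limit $x$ (unique, since $\tau_{\hat{d}}$ is a metric topology); sobriety then follows by playing this against Lemma~\ref{Cauchy}. Two elementary facts will do the bookkeeping. First, for any compatible quasi-metric $d(u,v)=0$ iff $u\le v$, while for Wilson's quasi-metric $d(u,v)<1/2^{m}$ iff $u\in U_k\Rightarrow v\in U_k$ for all $k\le m$; thus smallness of $d$ is precisely controlled membership in the first basic opens $U_0,\dots,U_m$. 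Second, both $S_D$ and $S_1$ have the property that every nonempty relatively open set is cofinite, and $S$ is dense in $A:=\cl_X(S)$; hence as soon as a basic open $U_m$ meets $A$ it already contains $a_n$ for all but finitely many $n$.

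For the forward implication I would assume $X$ sober and take such a Cauchy sequence. Since $S\cong S_D$ or $S_1$ is irreducible, $A=\cl_X(S)$ is irreducible, so sobriety gives $A=\da_X x$ for a unique $x$. Each $a_n\le x$, whence $d(a_n,x)=0$; and to obtain $d(x,a_n)\to0$ I note that every basic $U_m\ni x$ meets $A$, so by density and cofiniteness it contains $a_n$ for all large $n$, forcing $d(x,a_n)<1/2^{m}$ eventually. Combining the two, $\hat{d}(x,a_n)\to0$, i.e.\ $a_n\to x$ in $\tau_{\hat{d}}$.

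For the converse I would argue contrapositively. If $X$ is not sober, choose $A\in\mathbf{IRR}(X)\setminus\{\da_X x\mid x\in X\}$ and invoke Lemma~\ref{Cauchy} to produce a $\Pi^{0}_{2}$-Cauchy set $S=\{a_n\mid n\in\mathbb N\}$ homeomorphic to $S_D$ or $S_1$ with $\cl_X(S)=A$; the hypothesis then forces $(a_n)$ to converge to some $x$ in $\tau_{\hat{d}}$. From $d(x,a_n)\to0$ every open neighbourhood of $x$ eventually contains the $a_n$, so $x\in\cl_X(S)=A$. Conversely, for any $y\in A$ and any basic $U_m\ni y$, density and cofiniteness give $a_n\in U_m$ for large $n$, while $d(a_n,x)\to0$ gives $x\in U_m$ once $n$ is large; since this holds for every basic $U_m\ni y$ we conclude $y\le x$. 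Hence $A\subseteq\da_X x$, and since $x\in A$ forces $\da_X x\subseteq A$, we get $A=\da_X x$, contradicting the choice of $A$.

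The hard part will be the transfer from the original topology $\tau_d$ to the symmetric metric topology $\tau_{\hat{d}}$: convergence in $\tau_{\hat{d}}$ unpacks into the two one-sided conditions $d(x,a_n)\to0$ and $d(a_n,x)\to0$, and each must be rewritten as a membership statement about the countable basis. This is precisely where the explicit form of Wilson's quasi-metric is essential — the dyadic radius $1/2^{m}$ is calibrated to the first $m$ basic opens, which makes the ball-to-basis translation run uniformly for $S_D$ and $S_1$ alike. I expect the genuinely subtle step to be ``$x\in U_m$'' in the backward direction: for an arbitrary compatible quasi-metric, knowing only that $a_n\in U_m$ eventually and that $d(a_n,x)\to0$ need not force $x\in U_m$ (a limit of points of an open set can escape it), so the argument really leans on Wilson's calibration rather than on bare $\tau_d$-convergence.
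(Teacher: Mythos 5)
Your forward direction is sound and is essentially the paper's: sobriety plus irreducibility of $S_D$ and $S_1$ gives $\cl_X(S)=\da_X x$, the fact that $a_n\le x$ is equivalent to $d(a_n,x)=0$ handles one half of $\hat{d}$, and density of $S$ in $\cl_X(S)$ plus cofiniteness of nonempty relative opens handles the other half (though you should run that estimate with balls $B_d(x,\varepsilon)$ rather than with the dyadic calibration, since the theorem fixes an \emph{arbitrary} compatible quasi-metric $d$). The genuine gap is in the converse. The hypothesis available there concerns Cauchy sequences for the given $d$ and convergence in the associated $\tau_{\hat{d}}$; your key step --- ``$a_n\in U_m$ for large $n$ and $d(a_n,x)\to 0$ force $x\in U_m$'' --- is, as you yourself concede, valid only for Wilson's quasi-metric, and it genuinely fails for a general compatible quasi-metric (already in metric spaces a limit of points of an open set can escape it). So what you have proven is the theorem in the special case $d=$ Wilson's quasi-metric, not the statement as given; since both Cauchyness and $\tau_{\hat{d}}$ depend on the choice of $d$, the hypothesis for Wilson's metric does not transfer to the hypothesis for an arbitrary compatible $d$, and the two statements are not interchangeable.

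The missing idea --- and the way the paper closes exactly this gap without any appeal to a specific quasi-metric --- is that the sequence produced by Lemma~\ref{Cauchy} is not merely Cauchy: by construction it satisfies $a_{n+1}\in\bigcap_{i\le n}B_d\bigl(a_i,\frac{1}{n+2}\bigr)$, so for each \emph{fixed} $n$ one has $d(a_n,a_N)<\frac{1}{N+1}\to 0$ as $N\to\infty$. (Bare Cauchyness gives no such control when $n$ lies below the Cauchy threshold, which is why your attempted transfer of open-set membership to the limit cannot be replaced by a soft argument.) Combining this with $d(a_N,x)\to 0$, which follows from $\tau_{\hat{d}}$-convergence, the triangle inequality $d(a_n,x)\le d(a_n,a_N)+d(a_N,x)$ shows $d(a_n,x)<\varepsilon$ for every $\varepsilon>0$, hence $d(a_n,x)=0$, i.e. $a_n\le x$, for every $n$. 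Then $S\subseteq\da_X x$ gives $\cl_X(S)\subseteq\da_X x$, while $x\in\cl_X(S)$ (from $d(x,a_n)\to 0$) gives $\da_X x\subseteq\cl_X(S)$, so $A=\da_X x$, the desired contradiction. If you replace your calibration step by this triangle-inequality argument, exploiting the extra property of the constructed sequence rather than the explicit form of $d$, your proof becomes correct in the stated generality; as it stands, it establishes only the Wilson-metric instance.
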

\begin{proof}
Assume that $X$ is sober. Then we need to show that for each Cauchy sequence $(a_n)_{n\in \mathbb{N}}$ in $X$ converges with respect to the metric topology $\tau_{\hat{d}}$, where $S=\{a_n\mid n\in \mathbb{N}\}$ is homeomorphic to $S_D$ or $S_1$. The fact that $S_D$ and $S_1$ are both irreducible indicates that $\cl_X(S)$ is an irreducible closed subset of $X$. Hence $\cl_X(S)=\da_X x_0$ for some $x_0\in X$, as $X$ is sober. Now we check that the Cauchy sequence $(a_n)_{n\in \mathbb{N}}$ converges to $x_0$ with respect to the metric topology $\tau_{\hat{d}}$.

For any real number $ \varepsilon  > 0$ with $x_0\in B_d(x_0,\varepsilon)$. Since $x_0\in \cl_X(S)$, the set $B_d(x_0,\varepsilon)\cap S$ is non-empty. As $S$ is homeomorphic to $S_D$ or $S_1$, we know that there is $N\in \mathbb{N}$ such that $a_n\in B_d(x_0,\varepsilon)$ for all $n\geq N$, that is, $d(x_0,a_n)<\varepsilon$. For each $n\geq N$, note that $a_n\leq x_0$ infers that $d(a_n,x_0)=0$. It follows that $a_n\in B_{\hat{d}}(x_0,\varepsilon)$ for all  $n\geq N$.

For the converse, assume for every Cauchy sequence $(a_n)_{n\in \mathbb{N}}$ in $X$ converges with respect to the metric topology $\tau_{\hat{d}}$, where $S=\{a_n\mid n\in \mathbb{N}\}$ is homeomorphic to $S_D$ or $S_1$. It remains to show that $X$ is sober. We proceed by contradiction. Suppose that there is $A\in \mathbf{IRR}(X)$ and $A\notin \{\da_X x\mid x\in X\}$. According to the construction of Lemma~\ref{Cauchy}, we can construct a Cauchy sequence $(a_n)_{n\in \mathbb{N}}$ with $S=\{a_n\mid n\in \mathbb{N}\}$ is homeomorphic to $S_D$ or $S_1$ and $\cl_X(S)=A$. In addition, the Cauchy sequence satisfies the property that $a_{n+1}\in \bigcap_{i\leq n}B_{d}(a_i,\frac{1}{n+2})$ for any $n\in \mathbb{N}$. By assumption, we can identify $x\in X$ such that the Cauchy sequence $(a_n)_{n\in \mathbb{N}}$ converges to $x$ with respect to the metric topology $\tau_{\hat{d}}$. 

We claim that $a_n\leq x$ for any $n\in \mathbb{N}$. 
For each fixed $n\in \mathbb{N}$, it suffices to show that $d(a_n,x)<\varepsilon$ for $ \varepsilon  > 0$. It turns out that there exists $N_1\in \mathbb{N}$ such that $N_1\geq n+1$ and $\frac{1}{N+1}<\frac{\varepsilon}{2}$. The fact that $(a_n)_{n\in \mathbb{N}}$ converges to $x$ with respect to the metric topology $\tau_{\hat{d}}$ guarantees the existence of $N_2$ such that $a_n\in B_{\hat{d}}(x,\frac{\varepsilon}{2})$ for all $n\geq N_2$. Let $N=\max\{N_1,N_2\}$. Then $d(a_n,x)\leq d(a_n,a_N)+d(a_N,x)<\frac{\varepsilon}{2}+\frac{\varepsilon}{2}=\varepsilon$.

The assumption that the Cauchy sequence $(a_n)_{n\in \mathbb{N}}$ converges to $x$ with respect to the metric topology $\tau_{\hat{d}}$ induces that $x\in \cl_X(S)$. This yields that $A=\cl_X(S)=\da _X x$, and this implies that $X$ is sober. 
\end{proof}

Inspecting the definition of quasi-Polish spaces, we can obtain the following corollary immediately, which has been shown by de Brecht in \cite{B}.

\begin{corollary}\cite{B}
Every quasi-Polish space is sober.
\end{corollary}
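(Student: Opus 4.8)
The plan is to prove the corollary by combining the preceding theorem with the definitions collected in Section~\ref{qps-section}. Since every quasi-Polish space is by definition second countable and completely quasi-metrizable, I may fix a complete compatible quasi-metric $d$ on the given quasi-Polish space $X$ and then simply verify the hypothesis of the immediately preceding sobriety theorem. That theorem reduces sobriety to a single convergence condition: every Cauchy sequence $(a_n)_{n\in\mathbb{N}}$ whose underlying set $S=\{a_n\mid n\in\mathbb{N}\}$ is homeomorphic to $S_D$ or $S_1$ must converge with respect to the metric topology $\tau_{\hat{d}}$. So the entire content of the proof is to observe that completeness of $d$ delivers exactly this.

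First I would recall that, by the definition of a complete quasi-metric space given in Subsection~\ref{qps-section}, \emph{every} Cauchy sequence in $(X,d)$ converges with respect to $\tau_{\hat{d}}$, with no restriction on the homeomorphism type of its underlying set. In particular this holds for those special Cauchy sequences whose range is homeomorphic to $S_D$ or $S_1$. Thus the hypothesis of the sobriety theorem is satisfied vacuously in the sense that it is a weaker demand than what completeness already guarantees. Applying that theorem then yields that $X$ is sober, completing the argument.

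The proof is therefore essentially a one-line invocation, and I anticipate no genuine obstacle; the only point requiring a moment's care is the logical direction. The sobriety theorem asks us to check convergence only for the distinguished Cauchy sequences of type $S_D$ or $S_1$, whereas completeness supplies convergence for \emph{all} Cauchy sequences. Since the class we must handle is a subclass of the class completeness already controls, the implication goes through trivially; there is no danger of needing a converse. One should merely note explicitly that a quasi-Polish space is second countable, so that the second-countability hypothesis of the sobriety theorem is met and the theorem is applicable. With those two observations in place the corollary follows at once.
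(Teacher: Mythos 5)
Your proof is correct and is exactly the paper's intended argument: the paper derives the corollary ``immediately'' from the preceding sobriety theorem by noting that a quasi-Polish space is second countable and carries a complete compatible quasi-metric, so \emph{all} Cauchy sequences converge in $\tau_{\hat{d}}$, in particular those of type $S_D$ or $S_1$. Your extra remark about the logical direction (completeness gives more than the theorem demands) is a fair and accurate clarification, not a deviation.
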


%\begin{remark}\label{contain}
%Let $(X,d)$ be a quasi-metric space. Then the metric topology $\tau_{\hat{d}}$ is contained in the strong topology.
%\begin{proof}
%For any real number $ \varepsilon  > 0$, $x\in X$, it suffices to confirm that $x\in B_{d}(x,\varepsilon)\cap \da x\subseteq B_{\hat{d}}(x,\varepsilon)$. To this end, let $y\in B_{d}(x,\varepsilon)\cap \da x$. Then $d(x,y)<\varepsilon$. The assumption that $y\leq x$ manifests that $d(y,x)=0$. So $y\in B_{\hat{d}}(x,\varepsilon)$.
%\end{proof}
%\end{remark}

%\section*{Acknowledgements}
%{This work is supported by the National Natural Science Foundation of China (No.12231007)}.
\bibliographystyle{plain}

\end{document}